\newtheorem{remark}[theorem]{Remark}
\newcommand{\R}{\ensuremath{\mathbb{R}}}
\renewcommand{\P}{\ensuremath{\mathbb{P}}}
\newcommand{\E}{\ensuremath{\mathbb{E}}}
\newcommand{\Var}{\ensuremath{\mathbb{V}}}
\newcommand{\M}{\ensuremath{\mathbb{M}}}
\newcommand{\var}{\Var}
\newcommand{\Ord}{{\mathcal{O}} }
\newcommand{\F}{{\mathcal{F}} }
\newcommand{\N}{\mathbb{N}}
\newcommand{\w}{{\mathbf{w}} }
\newcommand{\U}{{\mathbf{U}} }
\newcommand{\V}{{\mathbf{V}} }
\newcommand{\Z}{Z}
\newcommand{\X}{{\mathbf{x}} }
\newcommand{\Y}{{\mathbf{y}} }
\newcommand{\Q}{{\mathbf{Q}}}
\newcommand{\x}{{\mathbf{x}} }
\newcommand{\A}{{\mathbf{A}} }
\renewcommand{\S}{{\mathbf{S}} }
\renewcommand{\phi}{\varphi}
\newcommand{\phibf}{\pmb{\varphi}}
\newcommand{\Div}{\mathrm{div}}
\renewcommand{\div}{\mathrm{div}}
\def\e{{\mathrm{e}}}
\newcommand{\Comment}[1]{}
\newcommand{\eps} {\varepsilon}
\renewcommand{\i}{\ifmmode\mathit{\mathchar"7010 }\else\char"10 \fi}
\renewcommand{\j}{\ifmmode\mathit{\mathchar"7011 }\else\char"11 \fi}
\newcommand{\Dx}{\Delta x}
\newcommand{\Dy}{\Delta y}
\newcommand{\Dz}{\Delta z}
\newcommand{\Dt}{\Delta t^n}
\newcommand{\cA}{\mathcal{A}}
\newcommand{\cN}{\mathcal{N}}
\newcommand{\rL}{\mathbf{L}}
\newcommand{\W}{\mathbf{W}}
\begin{document}

\title{Uncertainty quantification for hyperbolic\\conservation laws with flux coefficients\\given by spatiotemporal random fields}

\author{
        Andrea Barth$^{1}$
        and
        Franz G. Fuchs$^{2}$
        \\[.5em]
        \tiny
         $^1$SimTech, University of Stuttgart, Pfaffenwaldring 5a, 70569 Stuttgart, Germany
         $^2$SINTEF, Forskningsveien 1, 0314 Oslo, Norway
       }

\date{\today}

\maketitle

\begin{abstract}
    In this paper hyperbolic partial differential equations with random coefficients are discussed.
    We consider the challenging problem of flux functions with coefficients modeled by spatiotemporal random fields.
    Those fields are given by correlated Gaussian random fields in space and Ornstein--Uhlenbeck processes in time.
    The resulting system of equations consists of a stochastic differential equation for each random parameter coupled to the hyperbolic conservation law.
    We define an appropriate solution concept in this setting and analyze errors and convergence of discretization methods.
    A novel discretization framework, based on Monte Carlo Finite Volume methods, is presented for the robust computation of moments of solutions to those random hyperbolic partial differential equations.
    We showcase the approach on two examples which appear in applications: The magnetic induction equation and linear acoustics, both with a spatiotemporal random background velocity field.
\end{abstract}

\footnotetext[3]{Accepted for publication in SIAM Journal on Scientific Computing (SISC) "Methods and Algorithms for Scientific Computing".}

\begin{keywords}stochastic hyperbolic partial differential equation, uncertainty quantification, spatiotemporal random field, Monte Carlo method, random flux function, finite volume method, Ornstein--Uhlenbeck process, Gaussian random field
\end{keywords}

\setlength{\tabcolsep}{.0em}
\section{Introduction}

Hyperbolic partial differential equations with random data have been an active research field over the last decades.
In ample situations measurements are not accurate enough to allow an exact description of a physical phenomena by a deterministic model.  
To account for this, uncertainty is introduced in the appropriate parameters and the distribution of the (now stochastic) solution is studied.
In this paper we consider linear hyperbolic PDEs with time and space dependent flux functions.
Important examples of such equations include linear elasticity, and linear shallow water equations, as well as the linear acoustics and the magnetic induction equation considered in this article.

\subsection{Linear hyperbolic conservation/balance laws}
Many important physical phenomena can be modeled by first order linear hyperbolic systems.
We consider a system of the general form
\begin{equation}\label{eq:general_hyp}
    \begin{cases}
        \partial_t \U(\x,t) + \vec\div \cdot \left(\vec\A(\x,t) \ \U(\x,t)\right) = \S(\x,t),\\
        \hfill \U(\x,0) = \, \U_0(\x),
    \end{cases} \quad \x \in D\subset\R^d, t>0,
\end{equation}
with suitable boundary conditions.
Here $\U(\X,t)$ denotes the vector of conserved quantities at a point $\X$ in the domain $D$ at time $t$,
and $\vec\A \U  = (\A^{x_1} \U, \cdots, \A^{x_d} \U)$, for $d\in\N$, are the flux functions in the $x_1,\cdots, x_d$--direction, respectively,
where $\A^{x_r} : \R^d \rightarrow \R^d$ are linear maps.
The matrix $\vec \A$ is assumed to be diagonalizable with real eigenvalues.
Well-posedness of linear non-autonomous systems of conservation laws is studied for instance in \cite{godlewski1991hyperbolic, godlewski1995numerical, gustafsson1995time, LEV1, wlokapartial}.
For a linear advection equation, even in the case of variable coefficients, the characteristics never cross \cite{LEV1}.
Looking at the linear advection equation $u_t+(x u)_x=0$ in one (spatial) dimension, one can prove that all characteristic converge to the origin.
It becomes clear that solutions for $t\rightarrow\infty$ may become unbounded and contain Dirac delta functions.

In this article we treat the case where the flux functions $\vec \A(\x,t,\V)$ depend on $m\in\N$ coefficients $\V=(v_1,\cdots,v_m)$, that are modeled as correlated random fields in space, and stochastic processes in time, defined on a probability space $(\Omega,\cA,\P)$.
We assume from now on that the random fields are \emph{($\P$-almost surely) differentiable with respect to $\x$}, \cite{adler2009random}.
The stochastic processes considered provide the coefficients implicitly, namely \emph{as the solution} to a stochastic differential equation (SDE).
This means that the system we are considering is given by
\begin{equation}\label{eq:general_hyp_stoch}
    \begin{cases}
        \partial_t \U(\x,t) + \vec\div \cdot \left(\vec\A(\x,t,\V(\X,t)) \ \U(\x,t)\right) &= \S(\x,t),\\
        \partial_t \V(\X,t) &= \boldsymbol{\mu}(\V(\X,t)) + \boldsymbol{\sigma}(\V(\X,t)) \ \boldsymbol{\nu}(\X,t), \\
    \end{cases}
\end{equation}
where the vectors $\boldsymbol{\mu}$, and $\boldsymbol{\sigma}$ are arbitrary functions, and the vector $\boldsymbol{\nu}$ is a \emph{random function in space and time}, often referred to as the \emph{"noise term"}.

In general, the It\^o-SDE for the parameter $\V$ in Equation~\eqref{eq:general_hyp_stoch} has no explicit (strong or weak) solution.
Many stochastic schemes fall into the class of stochastic Runge-Kutta (SRK) methods.
Weak approximations focus on the expectation of functionals of solutions, whereas strong approximations are concerned with pathwise solutions.
For an overview on the theory of SDEs and numerical methods we refer to \cite{kloeden2012numerical, kloeden00, oksendal2013stochastic, milstein1995numerical, sobczyk2013stochastic} and references therein.

In general, there are no explicit solution formulas for deterministic variable-coefficient linear hyperbolic systems of the form \eqref{eq:general_hyp}, let alone for the stochastic PDE \eqref{eq:general_hyp_stoch}.
Numerical methods are therefore widely used to approximate the solutions of those types of equations.
Finite Difference, Finite Volume and Discontinuous Galerkin methods are popular approaches to obtain efficient time and space discretizations for the problem at hand, see \cite{LEV1,gustafsson1995time} and references therein.

\subsection{Uncertainty quantification}
In many applications the parameters of the flux functions in Equation~\eqref{eq:general_hyp} are determined by measurements. Then is, in fact, only statistical information available.
Among other phenomena, scarcity of measurements of material properties or background velocity fields lead to uncertainty in the parameters of the flux function.
Given the statistical description of the parameters, it is of interest to efficiently \emph{quantify} the resulting \emph{uncertainty} in the solution to Equation~\eqref{eq:general_hyp}.
An appropriate mathematical notion, together with a proof of existence and uniqueness, of random solutions for systems of hyperbolic conservation laws has recently been developed in e.g.~\cite{bijl2013uncertainty, vsukys2013multi}(see also references therein).

Efficient numerical methods for uncertainty quantification in the setting of partial differential equations has been intensively studied in recent years.
A non-exhaustive list of literature on uncertainty quantification for hyperbolic conservation laws includes~\cite{abgrall2008simple, chen2005uncertainty, lin2004stochastic, lin2006predicting, tryoen2010intrusive, poette2009uncertainty, wan2006long, gottlieb2008galerkin, JSK02, TZ10, poette2009uncertainty} and references therein.
Among the most popular techniques are stochastic Galerkin methods based on \emph{generalized polynomial chaos expansions} (gPC).
These methods reduce the stochastic model to a (high-dimensional) deterministic one. This comes to the price that they are highly intrusive, such that existing numerical schemes for conservation laws cannot be used.
A second class of methods for uncertainty quantification are stochastic collocation methods, which are non-intrusive and easier to parallelize than gPC based methods.
Solutions to hyperbolic conservation laws, however, do not have the necessary regularity with respect to the stochastic variables, which in general diminishes the use of both gPC and collocation methods.
There are a number of other techniques, namely stochastic Finite Volume methods (see~\cite{mishras}), adaptive analysis of variance, proper generalized decomposition, and Fokker-Planck-Kolmogorov type techniques.
The latter can handle low parametric regularity, but assume that the "effective" number of stochastic dimensions is low and require often impractical complex representations of the input random fields.

Here, we use \emph{Monte Carlo} (MC) methods to quantify the uncertainty, which comprises a class of non-intrusive methods well suited for problems with low parametric (stochastic) regularity.
Monte Carlo methods rely on repeated sampling of the probability/parameter space.
For each sample the underlying, (then) deterministic, PDE is solved and the sample solutions are combined to obtain statistical information in the form of moments of the distribution.
Monte Carlo methods are very robust with respect to the regularity of the solutions.
However, the convergence rate of a (plain) MC method is limited to $\frac{1}{2}$ with respect to the number of samples, which means that typically a large number of realizations of approximations of solutions to the underlying problem~\eqref{eq:general_hyp} has to be computed.
In the Monte Carlo approximation of moments of solutions to stochastic partial differential equations the discretization error consists of a \emph{statistical error} and a \emph{spatiotemporal} error of the numerical scheme, as can be seen in Equation~\eqref{eq:errorest}.
For non-autonomous linear systems of conservation laws it has been shown in~\cite{mishra2014multi}, that 
the number of Monte Carlo samples $M$ can be chosen
\begin{equation}\label{eq:mc_samples}
    M = \mathcal{O}(\Delta x^{-2o}),
\end{equation}
in order to equilibrate the statistical and spatiotemporal error of an underlying FV scheme with convergence rate $o>0$.
The computational complexity due to the slow convergence rate, can be lowered by \emph{Multilevel Monte Carlo} (MLMC) methods, which have been proposed in~\cite{vsukys2013multi} and related papers by the same authors~ \cite{mishra2012multilevel,mishra2012sparse,mishra2012multi}.
For a result on the convergence and computational complexity of the multilevel Monte Carlo approximation for general Hilbert-space-valued random variables see~\cite{BL12}.
The idea behind MLMC methods is to use a hierarchy of different levels of resolution of the underlying deterministic numerical solver.
The level dependent numbers of MC samples are then chosen, in such a way that the total computational work is minimal while the sum of all error terms is asymptotically optimized.
In the case of non-autonomous linear systems of conservation laws, see~\cite{mishra2014multi} for details.
Optimal computational complexity is not the main focus of this paper.
We remark that it is not particularly challenging to employ an MLMC method, as the problems considered here fall into the class of problems for which a general MLMC method is derived in~\cite{BL12}.

Random (scalar) linear transport equations are discussed for instance in \cite{OL, DC08, CD07, santos2010probability} and references therein.
In~\cite{DC11} the authors present expressions for the distribution of the solution of a linear advection equation with a time-dependent velocity, given in terms of the probability density function of the underlying integral of the stochastic process.
Numerical schemes are then introduced in~\cite{DC07,DFC09}.
A stochastic collocation method for the wave equation is introduced in~\cite{motamed2013stochastic}.
Uncertainty quantification of acoustic wave propagation in random heterogeneous layered media is presented in~\cite{mishra2014multi}.
In~\cite{JSK02} the linear advection equation with spatiotemporal coefficients is the subject of research. The authors develop numerical methods using polynomial chaos expansions to solve the advection equation with a transport velocity given by a Gaussian or a log-normal distribution.

\subsection{Aims and contributions of the paper}
We extend the existing framework for random solutions to linear hyperbolic systems presented in, e.g., \cite{vsukys2013multi, mishra2012multilevel,mishra2012sparse,mishra2012multi} and the references therein.
Random coefficients of the numerical fluxes are modeled as \emph{Gaussian random fields in space and Ornstein--Uhlenbeck processes in time}.
Thus, the system of equations consists of a stochastic differential equation (SDE) for \emph{each} random parameter coupled to the conservation law.
For that purpose,
\begin{itemize}
    \item we provide the necessary solution concepts for weak random solutions for linear conservation laws with \emph{time-dependent random field} coefficients, together with a well-posedness result (existence, uniqueness and dependence on initial data).
    \item we describe algorithms for \emph{fast generation} of such spatiotemporal random fields. Discretizations of the SDEs with the appropriate (weak) order are presented.
    \item we present two \emph{applications}: the magnetic induction equation and linear acoustics, both with background velocity fields modeled by spatiotemporal random fields.
\end{itemize}
The simulation results are based on a flexible, thread-parallel algorithm for the uncertainty quantification of linear conservation laws.

The remainder of the paper is organized as follows.
In Section~\ref{sec:theory}, we provide the necessary theoretical background for time-dependent random fields, as well as random (linear) hyperbolic conservation laws.
We present a Monte Carlo Finite Volume framework for approximating moments of the random solutions in Section~\ref{sec:MCFVM}.
This is followed by Section~\ref{sec:examples}, showcasing efficiency and robustness on realistic test cases,
and some conclusions in Section~\ref{sec:conclusion}.

\section{Stochastic linear hyperbolic conservation/balance laws with spatiotemporal random parameters}\label{sec:theory}

This article treats the case of random flux functions with coefficients that are modeled as \emph{spatiotemporal} random fields.
In order to rigorously define uncertainty of the parameters of the flux function, we start by recapitulating the necessary background from probability theory.

\subsection{Spatiotemporal random fields}
To this end, let $(\Omega, \cA)$ be a measurable space with elementary events $\omega\in\Omega$ endowed with a $\sigma$-algebra $\cA$.
Then, the measure space $(\Omega, \cA, \P)$ is called a \emph{probability space}, if $\P:\Omega\rightarrow[0,1]$ is a $\sigma$-additive set function such that $\P(\Omega)=1$.

\begin{definition}[Gaussian random field (GRF)]\label{def:grf}
    A \emph{random field} $g=\{g(\x), \X\in D\}$ (also written as $g(\X,\omega)$) is a set of real-valued random variables defined on a probability space $(\Omega, \cA, \P)$.
    The field $g(\X,\omega)$ is called \emph{Gaussian}, if the vector of random variables follows the multivariate Gaussian distribution for any $\X\in D$, i.e.,
    $g\sim \cN(\mu,C)$, where $\cN$ is the normal distribution with mean $\mu$ and covariance matrix $C(\X,\Y)$.
    Any Gaussian random field is completely defined by its second-order statistics.
\end{definition}

We note that $C$ is a nonnegative, semi-definite, symmetric function.
Bochner's theorem \cite{bochner2012harmonic} states that $C$ is the Fourier transform of a positive measure $\mu_C$ on $\R^d$.
If we assume that $\mu_C$ has a Lebesgue density $\gamma$ which is even and positive, we can construct a GRF by
\begin{equation}\label{eq:grfconst}
    g(\X) = (\F^{-1}\gamma^{1/2}\F W) (\X),
\end{equation}
where $\F$ denotes the d-dimensional Fourier transform with inverse $\F^{-1}$, and $W$ is a centered Gaussian family $W=\{W(\X), \X \in \R^d\}$
with covariance $\E[W(\X)W(\Y)] = \delta(\X-\Y), \X,\Y \in \R^d$ (see~\cite{LP}).

\begin{figure}
\begin{tabular}{lr}
    \subfigure[Ornstein--Uhlenbeck process in time.]{
        \includegraphics[width=0.48\textwidth]{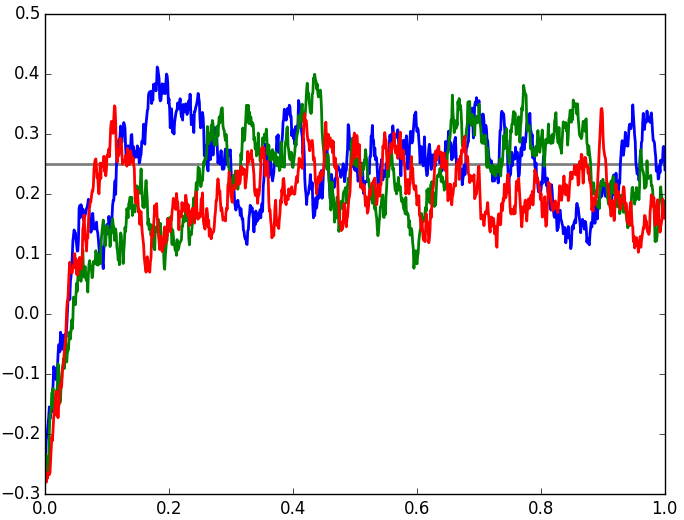}
}
&
    \subfigure[Correlated Gaussian random field in space.]{
        \includegraphics[width=0.48\textwidth]{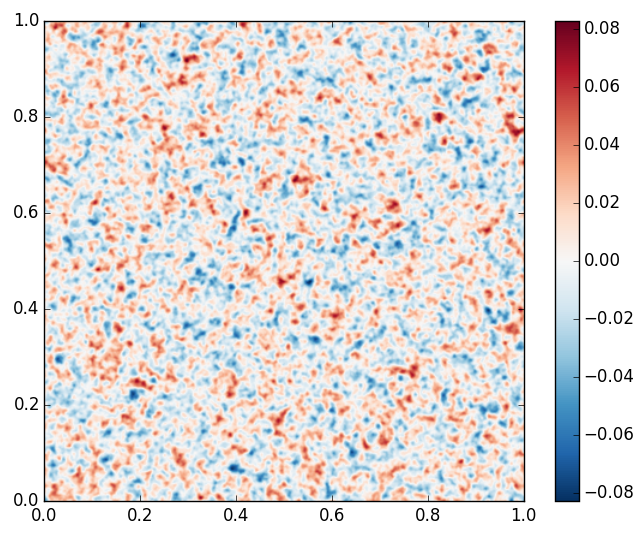}
}
\end{tabular}
    \caption{Sample solutions of random processes in time and random fields in space.}
    \protect \label{fig:examples}
\end{figure}

Looking at the time domain, a standard Brownian motion or Wiener process $B = (B(t), t\in[0,T])$, for $T<\infty$, defined on a probability space $(\Omega, \cA,\P)$, is a continuous stochastic process which starts in zero $\P$-a.s. and has independent and normally distributed increments, i.e., $B_t-B_s\sim\cN(0,t-s)$.
\begin{definition}[Ornstein--Uhlenbeck (OU) process]\label{def:oup}
Let $B=(B(t), t\in[0,T])$ be a standard Brownian motion.
For $\mu,\theta,\sigma\in\R$, $\theta>0$ and $\sigma>0$, the Ornstein--Uhlenbeck process is given as the solution of the stochastic differential equation
\begin{equation}\label{eq:OU}
    \begin{split}
        d a(t) &= \theta(\mu - a(t)) dt + \sigma dB(t),\\
        a(0) &= a_0,
    \end{split}
\end{equation}
In general the initial condition may be random as well.
\end{definition}

The OU processes is \emph{mean-reverting}: Starting at a value $a_0$, over time, the process tends to drift towards its long-term mean $\mu$.
See Figure~\ref{fig:examples} for some sample solutions.
\begin{remark}
    For every $t\in[0,T]$ the random variable $a(t)$ is normally distributed with mean and variance
\begin{align}\label{eq:OU_moments}
    \begin{split}
        \E(a(t)) &= \mu + (a_0-\mu)e^{-\theta t},\\
        \Var(a(t)) &= \frac{\sigma^2}{2\theta}(1-e^{-2\theta t}).
    \end{split}
\end{align}
    This can easily be shown by using It\^o's formula with the function $f(t,x) = \e^{\theta t} x$, and considering the dynamics of $f(t,a(t))$. Then, the stochastic differential equation~\eqref{eq:OU} has the following solution
    \begin{equation}
        \label{eq:OU_direct}
            a(t) = \mu + \e^{-\theta t}(a_0-\mu) + \sigma \int_0^t \e^{-\theta (t-s)}\, dB(s).
    \end{equation}
    From this form we can directly deduce the expectation of $a(t)$, the variance is derived by using the It\^o isometry.
\end{remark}
\begin{remark}
    The realizations of an Ornstein--Uhlenbeck process are continuous and nowhere differentiable with probability 1.
\end{remark}

\begin{definition}[Spatiotemporal random field]
\label{def:timedependentRF}
For all $t\in\R_+$ let $G(t) = \{G(\x,t), \X\in D\}$ be a Gaussian random field with covariance $C$ and mean $0$.
Further, a time-dependent random field is defined as the solution $\Z$ of the following SDE (cf. Definition~\ref{def:oup})
\begin{equation}\label{eq:timedependentRF}
    \begin{split}
        d \Z(\x,t) &= \theta(\mu(\x) - \Z(\x,t)) dt + \sigma dG(\x,t),\\
        \Z(\x,0) &= \Z_0(\x),
    \end{split}
\end{equation}
where $\mu$ is a continuously differentiable function in $L^\infty(D)$, and $\theta,\sigma$ are positive real parameters.

\noindent
The solution $\Z: \R^d \times \R_+ \rightarrow \R$ has the following properties.
\begin{itemize}
    \item For a fixed time $\tilde t\in\R_+$, $\{\Z(\X,\tilde t), \X \in D\}$ is a real-valued Gaussian random field.
    \item For each point $\tilde \x\in \R^d$, $(\Z(\tilde \X,t), t \in \R_+)$ is an Ornstein--Uhlenbeck process, i.e., a mean-reverting process.
\end{itemize}
\end{definition}

\subsection{Random conservation laws}
Equipped with a probability space $(\Omega, \mathcal{F}, \P)$ we can incorporate uncertainties in the flux of Equation~\eqref{eq:general_hyp} by considering the equation
\begin{equation}\label{eq:general_stochhyp}
    \begin{cases}
        \partial_t \U(\x,t,\omega) + \vec\div \cdot (\vec\A(\x,t,\omega) \U(\x,t,\omega)) = \S(\x,t, \omega),\\
        \hfill \U(\x,0, \omega) = \, \U_0(\x, \omega),
    \end{cases} \quad \x \in D\subset\R^d, t>0.
\end{equation}

We are interested in cases, where some or all of the coefficients of $\vec\A$ are modeled as a spatiotemporal random field according to Definition~\ref{def:timedependentRF}.
In order for this to make sense, we follow \cite{vsukys2013multi,mishra2014multi},
but extend the solution concept and definition of hyperbolicity to be time-dependent where necessary.

\begin{definition}[Hyperbolicity]\label{def:hyperbolicity}
For $\w$ in the unit sphere $\mathbb{S}^{d-1}$ let $\breve{\A}^\w (\x,t,\omega) =$\\ $\sum_{r=1}^d \w^{x_r} \A^{x_r}(\x,t,\omega)$ be the convex combinations of the directional random matrices $\A^{x_r}$.
Consider the eigen-decomposition
$$
\breve{\A}^\w (\x,t,\omega) = \Q^\w (\x,t,\omega) \pmb{\Lambda}^\w (\x,t,\omega) \Q^\w (\x,t,\omega)^{-1},
$$
where $\pmb{\Lambda}^\w$ is a diagonal matrix consisting of the eigenvalues $\left(\lambda^\w_r, 1\leq r\leq d\right)$ of $\breve{\A}^\w$, and $\Q^w$ contains the corresponding eigenvectors as columns.
The random linear system of conservation laws \eqref{eq:general_stochhyp} is $\P$-a.s. hyperbolic if all eigenvalues of $\breve{\A}^\w$ are real $\P$-a.s. for all $(\X,t)\in\R^d\times\R_+$.
In addition, for every finite time horizon $T<\infty$ there exists a $K(\omega) < \infty$ such that
\begin{equation}\label{eq:bound}
    \underset{\x\in D, t\in[0,T], \w\in\mathbb{S}^{d-1}}{\sup} \| \Q^\w (\x,t,\omega) \| \|\Q^\w (\x,t,\omega)^{-1} \| \leq K(\omega), \quad \P\text{-a.s.}.
\end{equation}
In addition, we require the \emph{expected wave speeds} to be finite, i.e., 
\begin{equation}\label{eq:eigenvaluebound}
  \widehat\lambda := \underset{1\leq r \leq d}{\max} \ \underset{\x\in D, \w\in\mathbb{S}^{d-1}}{\sup} \left| \E\left[\lambda^\w_r (\X,t,\cdot)\right] \ \right| < \infty, \quad \forall t\in[0,T].
\end{equation}
\end{definition}

We consider a measurable mapping
\begin{equation}
    \U:(\Omega,\mathcal{A}) \rightarrow (V,\mathcal{B}(V)), \quad \omega \mapsto \U(\X,t,\omega),
\end{equation}
where $\mathcal{B}(V)$ is a Borel $\sigma$-algebra of the function space $V$.
Then, we define the concept of a weak solution as follows.
\begin{definition}[Pathwise weak solution]\label{def:randomweaksolution}
    A random field $\U$, with values in $C([0,T],\rL^2(D))$, is called a (pathwise) weak solution to the stochastic conservation law \eqref{eq:general_stochhyp} on $D\times[0,T]$, with $D = \R^d$ and a finite time horizon $T<\infty$, if it is $\P$-a.s. a weak solution.
    This means that it satisfies the variational formulation
    \begin{equation}\label{eq:randomweak}
        \int_{\R^d\times[0,T]} \left(\U\cdot\phibf_t + \sum_{r=1}^d \A^{x_r} \U \cdot \phibf_{x_r} \right) d\X dt + \int_{\R^d} \U_0 \cdot \phibf_{|t=0} d\X = \int_{\R^d\times[0,T]}\S \cdot \phibf \,d\X dt,
    \end{equation}
    for all test functions $\phibf\in C^1_c(\R^d\times[0,T])$ for $\P$-a.e. $\omega \in \Omega$.
\end{definition}

Denote $\rL^p(D) = L^p(D)^d$ and $\W^{r,\infty}(D) = W^{r,\infty}(D)^d$.
We have the following result regarding existence and uniqueness of a solution.
\begin{theorem}[Well-posedness of stochastic linear hyperbolic conservation laws.]\label{theorem:wellposedness}
    Consider the linear conservation law~\eqref{eq:general_hyp_stoch}, and assume the following holds:
    \begin{itemize}
        \item the stochastic differential equation for the coefficients $\V$ (see Equation~\eqref{eq:full_SDE_System}) admits a unique solution (in the sense of~\cite[Chapter 5.2]{KS91})
        \item the system is hyperbolic according to Definition~\ref{def:hyperbolicity}, with (pathwise) constant $\bar K = \|K(\omega)\|_{\rL^k(\Omega,\R)} < \infty$,
        \item the moments of the initial data, the source and the flux are bounded in the following sense: there exist non-negative $r_0,r_S,r_A\in\N\cup\{0,\infty\}$ such that
            \begin{equation}
                \U_0 \in \rL^k(\Omega, \W^{r_0,\infty}), \quad 
                \S \in \rL^k(\Omega, \W^{r_S,\infty}), \quad 
                \A^{x_r} \in \rL^k(\Omega, \W^{r_A,\infty}), \quad 
            \end{equation}
            where $\Omega$ is the sample space of the probability space.
        \item each random field $\A^{x_r}$ is stochastically independent of $\U_0$ and $\S$.
    \end{itemize}
    Then, for $T<\infty$, the system~\eqref{eq:general_hyp_stoch} admits a unique pathwise weak solution.
    Moreover, for all $t\in[0,T]$, we have the following estimates
    \begin{align}
        \begin{split}
            \label{eq:estimates}
            \|\U(.,t,\omega)\|_{\rL^2(D)} &\leq K(\omega)\left( \| \U_0(.,\omega)\|_{\rL^2(D)} + t \|\S(.) \|_{\rL^2(D)} \right), \quad \P\text{-a.s.},\\
            \|\U\|_{\rL^k(\Omega,C([0,T],\rL^2(D))} &\leq \bar K \left( \| \U_0\|_{\rL^k(\Omega,\rL^2(D))} + t \|\S \|_{\rL^k(\Omega,\rL^2(D))} \right),
        \end{split}
    \end{align}
\end{theorem}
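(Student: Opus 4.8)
The plan is to reduce the stochastic well-posedness statement to the deterministic (pathwise) theory for non-autonomous linear hyperbolic systems, and then lift the pathwise bounds to the $\rL^k(\Omega,\cdot)$ bounds by integration. First I would fix $\omega\in\Omega$ outside the $\P$-null set on which the SDE for $\V$ fails to have a unique solution or on which hyperbolicity fails. For such $\omega$, the functions $(\X,t)\mapsto\A^{x_r}(\X,t,\omega)$ are fixed coefficients lying in $\W^{r_A,\infty}(D)$, the source $\S(\cdot,\omega)\in\W^{r_S,\infty}$, and the initial datum $\U_0(\cdot,\omega)\in\W^{r_0,\infty}$; the symmetrizability condition \eqref{eq:bound} holds with the finite constant $K(\omega)$. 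I would then invoke the classical deterministic existence/uniqueness result for symmetrizable linear hyperbolic systems with Lipschitz-in-$(\X,t)$ coefficients (e.g.\ \cite{godlewski1991hyperbolic,LEV1,wlokapartial}) to get a unique weak solution $\U(\cdot,\cdot,\omega)\in C([0,T],\rL^2(D))$ satisfying the variational identity \eqref{eq:randomweak}, together with the deterministic a priori estimate — this is exactly the first line of \eqref{eq:estimates}. The energy estimate is obtained in the standard way: symmetrize the system with the matrix $S^\w=(\Q^\w(\Q^\w)^\top)^{-1}$ built from \eqref{eq:bound}, differentiate $\tfrac12\int S\U\cdot\U\,d\X$ in time, use that the spatial divergence terms are controlled because $\A^{x_r}$ and hence $S$ are Lipschitz in $\X$, apply Grönwall, and absorb the accumulated exponential and the conditioning-number factor into $K(\omega)$.

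Second, I would address measurability: the map $\omega\mapsto\U(\cdot,\cdot,\omega)$ must be $(\cA,\mathcal B(C([0,T],\rL^2(D))))$-measurable so that the $\rL^k(\Omega,\cdot)$-norm in the second line of \eqref{eq:estimates} is well defined. This follows because the solution operator $(\U_0,\S,\{\A^{x_r}\})\mapsto\U$ is continuous (indeed the stability estimate shows Lipschitz dependence on the data, using the linearity of the equation and a Duhamel/difference argument for two sets of coefficients), the data $\U_0,\S,\A^{x_r}$ are themselves random fields measurable into the appropriate Sobolev spaces, and $\V$ — hence the coefficients $\A^{x_r}(\V)$ — is obtained as the strong solution of the SDE \eqref{eq:full_SDE_System}, which is adapted and measurable. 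Composition of a measurable map with a continuous one is measurable, giving the claim. The dependence-on-initial-data statement is then read off from the same Lipschitz estimate applied to the difference of two solutions.

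Third, for the $\rL^k(\Omega)$ estimate I would take the pathwise bound
\[
\|\U(\cdot,t,\omega)\|_{\rL^2(D)}\le K(\omega)\bigl(\|\U_0(\cdot,\omega)\|_{\rL^2(D)}+t\|\S(\cdot,\omega)\|_{\rL^2(D)}\bigr),
\]
take the supremum over $t\in[0,T]$ on the left (the right-hand side is monotone in $t$), raise to the $k$-th power, integrate over $\Omega$, and apply the triangle inequality in $\rL^k(\Omega)$ together with the stochastic independence of $\A^{x_r}$ from $(\U_0,\S)$: independence lets me factor $\E[K(\omega)^k\|\U_0\|^k]=\E[K^k]\,\E[\|\U_0\|^k]$ (and similarly for the source term), yielding the bound with $\bar K=\|K(\omega)\|_{\rL^k(\Omega,\R)}$. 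One subtlety is that the stated hypotheses only bound $K(\omega)$ in $\rL^k(\Omega)$ and the data in $\rL^k(\Omega,\W^{r,\infty})$, so to make the product integrable one genuinely needs the independence assumption (or else a Hölder split with matched exponents); I would flag that the constant in the final inequality is exactly $\bar K$ precisely because of this factorization.

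The main obstacle I expect is not the stochastic bookkeeping but verifying that the deterministic hypotheses needed for the classical well-posedness theorem are actually supplied by Definition~\ref{def:hyperbolicity} and the moment assumptions — in particular that $\W^{r_A,\infty}$-regularity of the flux coefficients in $(\X,t)$ (which requires the random field to be $\P$-a.s.\ differentiable in $\X$, as assumed after \eqref{eq:general_hyp_stoch}, together with OU-type regularity in $t$) is enough to run the symmetrized energy argument, and that the uniform bound \eqref{eq:bound} survives for a.e.\ $\omega$ with the right measurable dependence. The expected-wave-speed bound \eqref{eq:eigenvaluebound} is not needed for the pathwise part but will be used to ensure the $\rL^k(\Omega)$-norms on the right are finite when combined with the moment hypotheses on $\A^{x_r}$. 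Everything else is Grönwall, Fubini, and the triangle inequality.
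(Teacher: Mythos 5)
Your proposal is correct and follows essentially the same route as the paper: pathwise reduction to the deterministic (time-dependent) well-posedness theory to get existence, uniqueness and the first estimate, measurability of $\omega\mapsto\U(\cdot,t,\omega)$ via the stability estimate (continuous dependence on data in the separable space $\rL^2(D)$), and the $\rL^k(\Omega)$ bound obtained from the pathwise bound by integrating and factoring the expectation using the independence of $\A^{x_r}$ from $\U_0$ and $\S$. The paper's proof is only a sketch citing the analogue result in the autonomous case; your write-up fills in the same outline with more detail, so no substantive difference.
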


\begin{proof}
    The proof is an immediate consequence from~\cite[Theorem~1]{vsukys2013multi}, if one considers the following modifications to address the time-dependence of the flux function:  
    \begin{enumerate}
        \item Using the time-dependent version of the classical existence and uniqueness results summarized in~\cite[Theorem~1]{vsukys2013multi}, one can show that the random field given by the $\omega \mapsto \U(\cdot,\cdot,\omega)$ is well defined and that $U(\cdot,\cdot,\omega)$ is a weak solution $\P$-a.s..
        \item To show that the maps $\omega\mapsto \U(\cdot,t,\omega)$ are measurable for all $t\in[0,T]$ $\P$-a.s. one uses the fact that $L^2(D)$ is a separable Hilbert space and the stability estimate of the classical theorem for the deterministic (pathwise) solution.
        \item The first estimate in~\eqref{eq:estimates} may again be derived from the time-dependent version of the classical case (summarized in~\cite[Theorem~1]{vsukys2013multi}), and the second follows from the first, using the assumption that random fields, initial conditions and sources are independent.
    \end{enumerate}
    Incorporating the above into the proof of~\cite[Theorem~1]{vsukys2013multi}, the assertion follows immediately.
\end{proof}

If $\U_0,\S\in L^k(\Omega,\rL^2(D))$ and $K\in L^k(\Omega,\R)$ Theorem~\ref{theorem:wellposedness} ensures the existence of moments of order $k$ of the random weak solution.

In general, there are no explicit solution formulas.
For the special case of the scalar linear transport equation with a time-dependent coefficient (transport driven by the Ornstein--Uhlenbeck process) however, we can derive the distribution of the solution in closed form.
This distribution will then be used in an example presented in Section~\ref{sec:OUlinear} to verify our Monte Carlo Finite Volume method.
\begin{theorem}\label{theorem_lineartime}
    Consider the scalar transport equation
    \begin{equation}\label{eq:scalarlinearOU}
        \begin{cases}
            u(x,t,\omega)_t + (a(t,\omega) u(x,t,\omega))_x = 0\\
            u(x,0) = u_0(x)
        \end{cases}
    \end{equation}
    with coefficient $a(t,\cdot)$ given by the Ornstein--Uhlenbeck process \eqref{eq:OU}.
    The moments of the solution to Equation~\eqref{eq:scalarlinearOU} exist and are given by
    \begin{equation}\label{eq:time_coeff_moment0}
            \E(u(x,t)) = \int u_0(x-y)f_{A(\hat{\sigma}^2,\hat{\mu})}(y) \,dy = (f_{A(\hat{\sigma}^2,0)}*u_0)(x-t\hat{\mu}).
    \end{equation}
    Here, "$*$" denotes convolution and the probability density function $f_A$ is given by
    \begin{equation*}
        f_{A(\hat{\sigma}^2,\hat{\mu})}(y) = \frac{1}{\sqrt{2\pi \hat{\sigma}^2}} e^{-\frac{(y-\hat{\mu})^2}{2\hat{\sigma}^2}},
    \end{equation*}
    with diffusion coefficient $\hat{\sigma}^2=\frac{\sigma^2}{\theta^3} \big(\theta t + 2\e^{-\theta t} - \frac{1}{2}\e^{-2\theta t} - \frac{3}{2}\big)$ and transportation speed $\hat{\mu}=\mu - (a_0-\mu)\frac{\e^{-\theta t}-1}{\theta t}$.
\end{theorem}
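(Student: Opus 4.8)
The plan is to solve the transport equation pathwise by the method of characteristics, then integrate against the law of the random quantity appearing in the characteristic map. For a fixed $\omega$, the equation $u_t + (a(t)u)_x = 0$ is linear with a spatially constant coefficient $a(t,\omega)$, so its characteristics are the curves $\dot x(t) = a(t,\omega)$, i.e. $x(t) = x_0 + \int_0^t a(s,\omega)\,ds$. Along these curves $u$ is not conserved because the flux is in conservation form; instead $\frac{d}{dt} u(x(t),t) = -u(x(t),t)\,\partial_x a = 0$ since $a$ does not depend on $x$, so in fact $u$ \emph{is} constant along characteristics here. Hence the pathwise solution is $u(x,t,\omega) = u_0\big(x - \int_0^t a(s,\omega)\,ds\big)$. (This is consistent with Definition~\ref{def:randomweaksolution} and with Theorem~\ref{theorem:wellposedness}, which guarantees the solution is well defined; I would remark that $u_0$ should be taken in $\rL^2$ or smooth enough so the composition makes sense, and that the moment bound in Theorem~\ref{theorem:wellposedness} gives existence of the moments asserted here.)

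Next I would identify the distribution of $A(t,\omega) := \int_0^t a(s,\omega)\,ds$. Using the explicit OU representation \eqref{eq:OU_direct}, $a(s) = \mu + \e^{-\theta s}(a_0-\mu) + \sigma\int_0^s \e^{-\theta(s-r)}\,dB(r)$. Integrating in $s$ over $[0,t]$: the deterministic part gives $\mu t + (a_0-\mu)\frac{1-\e^{-\theta t}}{\theta}$, and the stochastic part, after swapping the order of integration (stochastic Fubini), becomes $\sigma\int_0^t\!\big(\int_r^t \e^{-\theta(s-r)}ds\big)dB(r) = \frac{\sigma}{\theta}\int_0^t\big(1-\e^{-\theta(t-r)}\big)dB(r)$. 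Since this is a Wiener integral of a deterministic kernel, $A(t)$ is Gaussian with mean $\hat\mu\, t := \mu t + (a_0-\mu)\frac{1-\e^{-\theta t}}{\theta}$ — matching the stated $\hat\mu = \mu - (a_0-\mu)\frac{\e^{-\theta t}-1}{\theta t}$ after dividing by $t$ — and with variance, by the It\^o isometry, $\hat\sigma^2 = \frac{\sigma^2}{\theta^2}\int_0^t\big(1-\e^{-\theta(t-r)}\big)^2 dr$. Expanding $(1-\e^{-\theta u})^2 = 1 - 2\e^{-\theta u} + \e^{-2\theta u}$ and integrating from $0$ to $t$ gives $\frac{\sigma^2}{\theta^2}\big(t - \frac{2}{\theta}(1-\e^{-\theta t}) + \frac{1}{2\theta}(1-\e^{-2\theta t})\big) = \frac{\sigma^2}{\theta^3}\big(\theta t + 2\e^{-\theta t} - \frac{1}{2}\e^{-2\theta t} - \frac{3}{2}\big)$, which is exactly the claimed $\hat\sigma^2$.

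Finally I would take expectations: $\E(u(x,t)) = \E\big[u_0(x - A(t))\big] = \int u_0(x-y) f_{A(\hat\sigma^2,\hat\mu t)}(y)\,dy$, which is the first equality in \eqref{eq:time_coeff_moment0} (with the understanding that the mean of $A$ is $\hat\mu t$, written in the paper as the $\hat\mu$ in $f_A$ being scaled so that the density is centered at $t\hat\mu$). Writing $y = z + t\hat\mu$ recenters the Gaussian at $0$ and shifts the argument, giving $(f_{A(\hat\sigma^2,0)} * u_0)(x - t\hat\mu)$, the second equality. Higher moments $\E(u(x,t)^k) = \E\big[u_0(x-A(t))^k\big] = (f_{A(\hat\sigma^2,0)} * u_0^k)(x - t\hat\mu)$ follow the same way, so the moments exist whenever $u_0^k$ is integrable against a Gaussian, consistent with Theorem~\ref{theorem:wellposedness}.

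The main obstacle is not any single computation but making the pathwise characteristic formula rigorous: justifying that $u(x,t,\omega) = u_0(x - A(t,\omega))$ is indeed \emph{the} weak solution in the sense of Definition~\ref{def:randomweaksolution} for $\P$-a.e.\ $\omega$ (a direct substitution into the variational identity \eqref{eq:randomweak}, using that $t\mapsto a(t,\omega)$ is continuous so $t \mapsto A(t,\omega)$ is $C^1$), and the application of stochastic Fubini to interchange $\int_0^t ds$ with $\int_0^s dB(r)$ — both are standard but should be cited. Everything after that is the elementary Gaussian algebra sketched above.
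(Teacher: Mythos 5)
Your proposal is correct and follows essentially the same route as the paper: pathwise solution $u_0\big(x-\int_0^t a(s,\omega)\,ds\big)$ via characteristics, identification of $A(t)=\int_0^t a(s)\,ds$ as Gaussian, computation of its mean and variance, and integration of $u_0$ against the resulting Gaussian density (including the same reconciliation of the mean $t\hat\mu$ of $A(t)$ with the ``transportation speed'' $\hat\mu$ in the statement). The only deviation is technical: you obtain $\var(A(t))$ by stochastic Fubini, writing $A(t)$ as the single Wiener integral $\frac{\sigma}{\theta}\int_0^t\big(1-\e^{-\theta(t-r)}\big)\,dB(r)$ and applying the It\^o isometry (which also gives Gaussianity of $A(t)$ for free), whereas the paper evaluates the double time integral of the covariance of the OU process; both yield the stated $\hat\mu$ and $\hat\sigma^2$.
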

\begin{remark}
    Higher moments of the solution may be calculated by
    \begin{align}
            \M_m(u(x,t)) = \E\big((u(x,t)-\E(u(x,t)))^m\big).
    \end{align}
\end{remark}

\begin{proof}
    The solution for a single realization (fixed $\omega\in\Omega$) of Equation~\eqref{eq:scalarlinearOU} is given by $u_0(x-\int_0^ta(s,\omega)\,ds)$. 
    We start by calculating the first moment of this expression, i.e.
    \begin{equation*}
        \E(u_0(x-\int_0^ta(s)\,ds)).
    \end{equation*}
    This means, we have to calculate the distribution of the time integral over $a$, i.e. the distribution of the stochastic process
    \begin{equation*}
        A(t) = \int_0^t a(t)\, dt.
    \end{equation*}
    The process $A$ is again a Gaussian process, i.e. $A(t)\sim\cN(\hat{\mu}, \hat\sigma^2)$, and therefore completely characterized by its mean and variance. Using Fubini's theorem we have that
    \begin{align}
        \label{eq:OUtime_mu}
        \begin{split}
            \E(A(t))
            = \int_0^t \E(a(s))\, ds
            = \int_0^t \mu + \e^{-\theta s}(a_0-\mu)\, ds
            = \mu t - (a_0-\mu)\frac{\e^{-\theta t}-1}{\theta}
            =:\hat{\mu}.
        \end{split}
    \end{align}
    We express the variance of $A$ via the covariance of $A$ with itself
    \begin{equation*}
        \var(A(t)) = \text{Cov}(A(t),A(t)) = \E\big((A(t)-\E(A(t)))(A(t)-\E(A(t)))\big).
    \end{equation*}
    Using $A(t)-\E(A(t))=\sigma\int_0^t\int_0^s\e^{-\theta(s-u)}\,dB(u)\,ds$ (combine Equations~\eqref{eq:OU_direct} and \eqref{eq:OUtime_mu}) this yields
    \begin{align*}
        \var(A(t)) &= \E\big(\sigma \int_0^t \int_0^s \e^{-\theta(s-u)}\,dB(u)\,ds\, \sigma \int_0^t \int_0^r \e^{-\theta(r-v)}\,dB(v)\,dr\big)\\
        &= 2\sigma^2 \int_0^t \e^{-\theta s}\int_0^t\e^{-\theta r} \,\E\big( \int_0^s \e^{\theta u}\,dB(u) \int_0^r \e^{\theta v}\,dB(v)\big)\,dr\,ds,
    \end{align*}
    using Fubini's theorem.
    For a Brownian motion $B$, it is known that $$\E( \int_0^s \e^{\theta u}\,dB(u) \int_0^r \e^{\theta v}\,dB(v))= \frac1{2\theta}(e^{2\theta \text{min}(s,r)}-1).$$
    Therefore, we have
    \begin{align*}
        \var(A(t)) &= 2\sigma^2 \int_0^t \e^{-\theta s}\int_0^s\e^{-\theta r} \frac1{2\theta}(e^{2\theta \text{min}(s,r)}-1)\,dr\,ds\\
        &= \frac{\sigma^2}{\theta} \int_0^t \e^{-\theta s}\int_0^s\e^{-\theta r}(e^{2\theta r}-1)\,dr\,ds
        = \frac{\sigma^2}{\theta^3} \big(\theta t + 2\e^{-\theta t} - \frac{1}{2}\e^{-2\theta t} - \frac{3}{2}\big)
        =:\hat{\sigma}^2.
    \end{align*}
    This gives us the variance of $A(t)$ depending on the variables $\theta$ and $\sigma$.

    Therefore, the expectation of the solution to Equation~\eqref{eq:scalarlinearOU} is given by
    \begin{equation*}
        \E(u_0(x-\int_0^ta(s)\,ds)) = \E(u_0(x-A(t)))
        =\int_{-\infty}^\infty u_0(x-y) f_A(y) \,dy
    \end{equation*}
    where $f_A$ is the normal density function with parameters $\hat{\mu}$ and $\hat{\sigma}^2$ given by
    $
        f_A(y) = \frac1{\sqrt{2\pi\hat{\sigma}^2}}\e^{-\frac{(y-\hat{\mu})^2}{2\hat{\sigma}^2}}.
    $
\end{proof}

\medskip
\begin{remark}
    For the limit $\theta \rightarrow 0$, we recover the corresponding result for a pure Brownian motion process (i.e. $a(t)= \sigma B(t)$), where $\hat{\mu} = \mu$ and $\hat{\sigma}^2 = \sigma^2\frac{t^3}{3}$.
    This can be shown by a Taylor expansion as
    \begin{align*}
        \var(A(t)) &= \frac{\sigma^2}{\theta^3} \big(\theta t + 2\e^{-\theta t} - \frac{1}{2}\e^{-2\theta t} - \frac{3}{2}\big)\\
                   &= \frac{\sigma^2}{\theta^2} \Big(\theta t + 2\left(1 -\theta t +\theta^2 t^2/2 -\theta^3 t^3/3! + \Ord(\theta^4)\right)\\
                   &\quad \quad \quad - \frac{1}{2}\left(1 -2\theta t +4\theta^2 t^2/2 - 2^3\theta^3 t^3/3! + \Ord(\theta^4)\right)- \frac{3}{2}\Big)
                   = \sigma^2 t^3/3 + \Ord(\theta).
    \end{align*}
    A similar Taylor expansion shows the result for $\E(A(t))$.
\end{remark}

\section{Monte Carlo Finite Volume methods}\label{sec:MCFVM}

For the approximation of the (moments of the) solution to partial differential equations with random coefficients we have to discretize in space and time, as well as in the ``stochastic domain".
We use a Monte Carlo method for the approximation of moments of the random solution.
This means
that we have to approximate the (deterministic) solution for each realization $\omega$ of Equation~\eqref{eq:general_stochhyp}, i.e.,
\begin{align}\label{eq:full_SDE_System}
    \begin{split}
        \partial_t \U(\x,t) + \vec\div \cdot \left(\vec\A(\x,t,\V(\X,t)) \ \U(\x,t)\right) &= \S(\x,t),\\
        \partial_t \V(\X,t) &= \boldsymbol{\mu}(\V(\X,t)) + \boldsymbol{\sigma}(\V(\X,t)) \ \boldsymbol{\nu}(\X,t), \\
    \end{split}
\end{align}
where each component of $\V$ is given by the spatiotemporal random field~\eqref{eq:timedependentRF}.
As mentioned in the Introduction, a Multilevel MC method could be used to achieve optimal computational complexity.
This is, however, not the goal of this article.

Before we start with a brief recapitulation of Finite Volume methods, we introduce some useful notation.
Let the computational domain be a bounded axiparallel domain, i.e., $D=D_x\times D_y\times D_z$.
A uniform axiparallel mesh of the domain $D$ consists of identical cells $C_{i,j,k}$, for $1\leq i\leq I$, $1\leq j\leq J$, $1\leq k\leq K$ and $I,J,K<\infty$, with edge lengths $\Delta x, \Delta y, \Delta z$ in the x-, y- and z-direction, respectively.
The cell centers are denoted by $\x_{i,j,k}$, and values at the cell interfaces by $x_{i-1/2}, y_{j-1/2}, z_{k-1/2}$.
For a function $f(\X,t):D\rightarrow \R$ we set
$
f_{i,j,k}^n = f(\X_{i,j,k}, t^n),
$
where $t^n$, for $n=1,\ldots,N$, is the n-th time step.

\subsection{Finite volume methods}

A Finite Volume (FV) scheme is obtained by integrating Equation~\eqref{eq:general_hyp} over a cell, or control volume, $C_{i,j,k}$, and over a time interval $T^n=[t^{n},t^{n+1}]$, $t^{n+1}=t^n + \Dt$.
Denoting cell averages by $\U_{i,j,k}(t) = \frac{1}{|C_{i,j,k}|}\int_{C_{i,j,k}} \U(\X,t) \,d\x$, a fully discrete flux-differencing method has the form
\begin{align}\label{eq:FVtime}
    \begin{split}
        \U_{i,j,k}^{n+1} = \U_{i,j,k}^n &- \frac{\Dt}{\Dx}(F^n_{i+1/2,j,k} - F^n_{i-1/2,j,k})\\
                          & - \frac{\Dt}{\Dy}(G^n_{i,j+1/2,k} - G^n_{i,j-1/2,k}) - \frac{\Dt}{\Dz}(H^n_{i,j,k+1/2} - H^n_{i,j,k-1/2}),
    \end{split}
\end{align}
where $F,G,H$ are the fluxes in x-, y- and z-direction respectively.
The fluxes $F^n_{i+1/2,j,k}$ approximate the integral
\begin{equation} \label{eq:fluxapprox}
    F^n_{i+1/2,j,k} \approx \frac{1}{\Dt \Dy \Dz}\int_{T^n}\int_{y_{j-1/2}}^{y_{j+1/2}}\int_{z_{k-1/2}}^{z_{k+1/2}}f(x_{i+1/2}, y, z) dt \, dy \, dz.
\end{equation}
The approximations for the fluxes in the other directions are equivalently defined.
Using the flux-differencing form, FV methods are typically based on the reconstruct-evolve-average (REA) algorithm.
This algorithm consists of the following steps performed for each time step:
First, one reconstructs the cell averages by a piecewise polynomial function.
Then, one evolves the hyperbolic equation by defining the numerical fluxes~\eqref{eq:fluxapprox}.
Finally, the new cell averages are computed.
The numerical fluxes are based on solutions of local Riemann problems at each cell interface.
High order accuracy in space is achieved by using TVD limiters, or (W)ENO schemes \cite{harten1987uniformly,LEV1,VL1,HEOC1,SO1},
and in time by strong stability-preserving (SSP) Runge-Kutta methods, see e.g., \cite{gottlieb2001strong}.
The CFL condition dictates a limit on the time step size $\Delta t^n$ for the resulting explicit FV schemes.
Let $\lambda_p^x,\lambda_p^y,\lambda_p^z$ be the eigenvalues of Equation~\eqref{eq:general_hyp} in x-, y-, and z-direction, then the time step size $\Dt$ needs to satisfy
\begin{equation}\label{eq:CFL}
    \frac{\Delta t^n}{\Delta x} \overline{\lambda^x} 
    +
    \frac{\Delta t^n}{\Delta y} \overline{\lambda^y} 
    +
    \frac{\Delta t^n}{\Delta z} \overline{\lambda^z} 
    \leq \frac{1}{2},
\end{equation}
where $\overline{\lambda^x} = \underset{i,j,k}\max \, \underset{p}\max |\lambda^x_p(\X_{i,j,k},t^n)|$ is the largest \emph{absolute} eigenvalue in x-direction, and similar in y-, and z-directions.

\subsection{Realizations of spatiotemporal random fields}\label{sec:RFapprox}
In order to approximate moments of the solution to Equation~\eqref{eq:general_stochhyp}, we need to create realizations of the spatiotemporal random fields given in Definition~\ref{def:timedependentRF}.
Therefore, we describe here one way to discretize the SDE~\eqref{eq:timedependentRF} in time and space.
\subsubsection{Discretization in time}
There is a wide variety of numerical methods for SDEs.
Methods are classified mainly according to their \emph{strong} ($p_s$) and \emph{weak} ($p_w$) orders of convergence.
The order of weak convergence is typically higher than the order of strong convergence of a scheme.
For example the Euler-Maruyama approximation has convergence orders $(p_s, p_w) =(1/2,1)$,
and the Milstein method has convergence orders $(p_s,p_w) = (1,1)$.
For a classification of higher order SRK schemes see, e.g., \cite{debrabant2008classification}.

For the Ornstein--Uhlenbeck process, given in Equation~\eqref{eq:timedependentRF}, the Milstein method is equivalent to the Euler-Maruyama method, since $\boldsymbol{\sigma}(\V)=\boldsymbol{\sigma}$ is independent of the process.
The Milstein method is given by 
\begin{equation}\label{eq:OU_discrete_original}
    Z^{l+1}(\X) - Z^l(\X) = h \theta\left(\mu(\X) - Z^{l}(\X)\right) + \sigma \sqrt{h} G^l(\X),
\end{equation}
where $Z^l(\X) = Z(\X, t^l)$ for all $\X\in D$ and $G^l(\X) = G(\X,t^l)$ is a Gaussian random field in $\X$ at the discrete times $t^l$.

Depending on the parameters, the Ornstein--Uhlenbeck process, given in Equation~\eqref{eq:timedependentRF}, is \emph{stiff} and appropriate \emph{implicit} schemes have to be used.
For instance, the implicit Milstein method is given by
\begin{equation}\label{eq:OU_discrete}
    \begin{cases}
        Z^{l+1}(\X) &= \left(Z^l(\X) + h\theta\mu(\X) + \sigma\sqrt{h} G^l(\X)\right)/\left(1 + h\theta\right), \\
        Z^0 &= \mu(\X),
    \end{cases} \quad \forall \X \in D.
\end{equation}

In general, higher order SRK schemes become increasingly involved, for the Ornstein--Uhlenbeck process, however, those schemes simplify considerably, as the function $\boldsymbol{\sigma}(\V)=\boldsymbol{\sigma}$ is constant.

\subsubsection{Discretization in space}
The discretization in Equation~\eqref{eq:OU_discrete} is only semi-discrete as it is continuous in space.
For a fully discrete approximation of a realization of Equation~\eqref{eq:timedependentRF} we need to provide an algorithm that approximates realizations of the Gaussian random fields $G^l(\X)$ on a Cartesian grid over $D\subset\R^d$ at each time step.
To this end, we use the approach described in \cite{LP}, which provides the following algorithm based on the representation formula \eqref{eq:grfconst}.
For simplicity we only present the periodic case.
Let $\mathcal{F}$ be the discrete Fourier transform with inverse $\mathcal{F}^{-1}$, and let $Z_{i,j,k}$ be random samples from a normal distribution for all $0\leq i<I ,0\leq j<J,0\leq k<K$, i.e., $Z_{i,j,k} \sim \mathcal{N}(0,|\Delta_c|^{-1})$, where $|\Delta_c|$ is the volume of the cells. Then, a Gaussian random field is given by
\begin{equation}\label{eq:GRF_approx}
        \{G^l_{i,j,k}\} = \mathcal{F}^{-1}\left\{\sqrt{\gamma_{i,j,k}}\,\mathcal{F}\left\{Z_{i,j,k}\right\}\right\},
\end{equation}
where the covariance is given by the Fourier transform of the symmetric, positive function $\gamma$.
A typical family of functions for the Lebesgue density $\gamma$ is given by
\begin{equation}\label{eq:correlation}
    \gamma_{i,j,k} = (1 + \|\mathbf{p}_{i,j,k}\|_2^{q})^{-l}, \quad k,l\in\N,\, q,l\geq1,
\end{equation}
where the points $\mathbf{p}_{i,j,k}$ in the Fourier domain are given by $\mathbf{p}_{i,j,k} = (i-I/2, j-J/2, k-K/2)^T$.
Another possibility would be to employ an exponential covariance function given by
    $
    \gamma_{i,j,k} = \exp(-\|\mathbf{p}_{i,j,k}\|_2/v) , 
    $
with correlation length $v$.

\subsection{Monte Carlo approach}

We employ a Monte Carlo Finite Volume method (MC FV method) in order to approximate the moments of the stochastic PDE \eqref{eq:general_hyp_stoch} where particular coefficients are modeled as spatiotemporal random fields given in Equation~\eqref{eq:timedependentRF}, such that the system is hyperbolic according to Definition~\ref{def:hyperbolicity}. As before, we consider a bounded axiparallel domain $D$ together with a mesh $\mathcal{T}$ consisting of identical cells $C_{i,j,k}$.

The step size $\Dt$ of the explicit PDE solver is given by the CFL condition~\eqref{eq:CFL} and depends on the eigenvalues which are influenced by the stochastic parameters $\V$.
On the other hand, the interval length $h$ of the discretization of the SDE is constant for each realization.
To achieve an optimal convergence rate, the approximation error of the SDE's and the FV method should be of the same order.
If a bound of the maximum expected eigenvalue $\widehat\lambda$, as defined in Equation~\eqref{eq:eigenvaluebound}, is available, one can choose
\begin{equation}
    h = c \, \frac{\Delta x}{\widehat\lambda},
\end{equation}
where $c>0$ is a constant.
We require the discrete times $t^n$ of the FV scheme to be a subset of the discrete times of SDE approximation, i.e., $\{t_n, \, n=1,\ldots,N\} \subset \{t^l = l \cdot h, \, l=1,\ldots,L\}$.

The MC FV algorithm consists of three main steps.
\begin{enumerate}

    \item Generate $M$ realizations $(Z(x,t,\omega_m),\,1\leq m\leq M)$ \emph{for each parameter} that is modeled as a spatiotemporal random field~\eqref{eq:timedependentRF}:
        \begin{itemize}
            \item Choose an appropriate interval $h = T/L$ for the approximation of the SDE.
            \item Generate $(L-1) \cdot M$ (times the number of parameters) Gaussian random fields $G^l_{i,j,k}$ on the given mesh $\mathcal{T}$ according to Equation~\eqref{eq:GRF_approx}.
            \item Use an approximation scheme with the same (weak) order as the FV method.
        \end{itemize}
    \item For each generated realization, the deterministic problem of the underlying hyperbolic conservation law is solved on the given mesh $\mathcal{T}$.
        In the underlying Riemann problem, the random field $Z^{l}_{i,j,k}$ is assumed to be piecewise constant on the time intervals $[lh, (l+1)h]$.
        For the sample $\omega_m$ we denote by $\U(\x,T,\omega_m)$ the exact pathwise solution at time $T$, and by $\U_\mathcal{T}(\x,T,\omega_m)$ the numerical approximation.

    \item The $M$ approximations of the sample solutions, i.e., $\U_\mathcal{T}(\x,T,\omega_m)$ are used to approximate moments of the random solution field $\U(\x,T,\omega)$.
\end{enumerate}

One is particularly interested in the first two moments, i.e., the expectation $\E[\U]$ and the variance $\Var[\U]$.
The sample mean of the approximate solutions is used to estimate the expectation given by
\begin{equation}
    E_M[\U_\mathcal{T}] (\x,T)= \frac{1}{M}\sum_{m=1}^M \U_\mathcal{T}(\x,T,\omega_m).
\end{equation}
Higher statistical moments of $\U$, such as the variance $V_M[\U_\mathcal{T}] (\x,T)$, can be approximated similarly.
The total approximation error of the expectation in the $L^1$ norm, i.e.
\begin{equation}\label{eq:E_appr}
    \eps_{\text{appr}}(T) = |C_{i,j,k}| \ \sum_{i,j,k} \Big| E_M[\U_\mathcal{T}](\X_{i,j,k},T) - \E[\U](\x_{i,j,k},T) \Big|,
\end{equation}
is bounded by the sum of the numerical approximation error $\eps_{\text{num}}$ of the base method and the Monte Carlo error $\eps_{\text{MCM}}$
\begin{equation}\label{eq:errorest}
    \eps_{\text{appr}}(t) \leq \eps_{\text{num}}(t) + \eps_{\text{MCM}}(t),
\end{equation}
where
\begin{align}
    \begin{split}
        \eps_{\text{num}}(t) &= |C_{i,j,k}| \ \sum_{i,j,k} \Big| \frac{1}{M} \sum_{m=1}^M \left( 
\U_\mathcal{T}(\x_{i,j,k},T,\omega_m) - \U(\x_{i,j,k},T,\omega_m)
\right) \Big|
, \\
        \eps_{\text{MCM}}(t) &= |C_{i,j,k}| \ \sum_{i,j,k} \Big| \frac{1}{M} \sum_{m=1}^M 
\U(\x_{i,j,k},T,\omega_m) - \E[\U](\x_{i,j,k},T)
 \Big|
.
    \end{split}
\end{align}
Using the triangle inequality, it is trivial to show the relationship \eqref{eq:errorest}. If one uses the $L^2$ norm then equality holds.

The estimate~\eqref{eq:errorest} shows that the approximation error is bounded by the dominating part of the sum of the numerical error and the Monte Carlo error. The Monte Carlo method converges with the rate $1/2$ in the number of samples in mean square and is independent of the resolution of the grid, i.e. the size of $\Dx$. On the other hand, a numerical base method of order $o$ converges with $\mathcal{O}(\Dx^o)$ for each single realization, independent of the number of Monte Carlo samples.
Therefore, equation~\eqref{eq:errorest} suggests that our Monte Carlo method is most efficient if $\eps_{\text{num}} \simeq \eps_{\text{MCM}}$. For the according sample numbers in a MLMC approach we refer to~\cite{BL12}. 
In \cite{mishra2014multi} for instance it is shown that this can be achieved if the number of Monte Carlo samples is $M = \mathcal{O}(\Delta x^{-2o})$, where $o$ is the order of the FV method.

\begin{lemma}
    Assume that,
    \begin{itemize}
        \item the conditions of Theorem~\ref{theorem:wellposedness} are fulfilled for $k\geq2$, 
        \item the underlying numerical FV scheme converges (under grid refinement) to the weak solution of Equation~\eqref{eq:general_hyp} with rate $o>0$,
        \item the numerical scheme for the SDE converges at the same rate $o>0$.
    \end{itemize}
    Then, the MC FV method estimates $E_M[\U_\mathcal{T}]$ described in this section converge to the first moment of the solution $\E[\U]$ in mean square sense, as $\Delta x \rightarrow 0$, with $M = \mathcal{O}(\Delta x^{-2o})$.
\end{lemma}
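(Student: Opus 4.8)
The plan is to decompose the mean-square error $\|E_M[\U_\mathcal{T}](\cdot,T) - \E[\U](\cdot,T)\|$ into a statistical (Monte Carlo) contribution and a deterministic discretization contribution, following the standard template for Monte Carlo Finite Volume analysis as in~\cite{mishra2014multi,vsukys2013multi}. First I would write, for the $L^2(D)$-valued random variables in play,
\begin{equation*}
    E_M[\U_\mathcal{T}] - \E[\U] = \big(E_M[\U_\mathcal{T}] - \E[\U_\mathcal{T}]\big) + \big(\E[\U_\mathcal{T}] - \E[\U]\big),
\end{equation*}
so that by the triangle inequality in $\rL^2(\Omega,\rL^2(D))$ the total error is bounded by the first term, which is a centered sum of i.i.d.\ samples, plus a bias term. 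The bias $\|\E[\U_\mathcal{T}]-\E[\U]\|_{\rL^2(D)} \le \E\|\U_\mathcal{T}-\U\|_{\rL^2(D)}$ is controlled by the pathwise convergence of the combined FV/SDE scheme: by hypothesis the FV scheme converges at rate $o$ and the SDE solver converges at the same rate $o$, and Theorem~\ref{theorem:wellposedness} (with $k\ge 2$) guarantees the $\rL^k(\Omega,\cdot)$-integrability needed to pass the pathwise estimate through the expectation, giving a bias of size $\mathcal{O}(\Delta x^{o})$.

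Next I would handle the statistical term. Since the samples $\U_\mathcal{T}(\cdot,T,\omega_m)$ are i.i.d.\ copies of $\U_\mathcal{T}(\cdot,T,\cdot)$, a standard variance computation for Hilbert-space-valued Monte Carlo estimators gives
\begin{equation*}
    \E\,\big\| E_M[\U_\mathcal{T}] - \E[\U_\mathcal{T}] \big\|_{\rL^2(D)}^2 = \frac{1}{M}\,\Var\big(\U_\mathcal{T}(\cdot,T,\cdot)\big) \le \frac{1}{M}\,\E\|\U_\mathcal{T}(\cdot,T,\cdot)\|_{\rL^2(D)}^2,
\end{equation*}
and the second moment on the right is bounded uniformly in $\Delta x$ by the stability estimate~\eqref{eq:estimates} of Theorem~\ref{theorem:wellposedness} (using $\U_0,\S\in\rL^k(\Omega,\cdot)$ with $k\ge2$ and $\bar K<\infty$; one also uses that the discrete scheme inherits an analogous stability bound from the CFL condition~\eqref{eq:CFL}). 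Hence the statistical root-mean-square error is $\mathcal{O}(M^{-1/2})$. Combining the two contributions,
\begin{equation*}
    \big\| E_M[\U_\mathcal{T}] - \E[\U] \big\|_{\rL^2(\Omega,\rL^2(D))} \le C\big(M^{-1/2} + \Delta x^{o}\big),
\end{equation*}
and choosing $M = \mathcal{O}(\Delta x^{-2o})$ equilibrates the two terms, so the right-hand side is $\mathcal{O}(\Delta x^{o})\to 0$ as $\Delta x\to0$, which is the claimed mean-square convergence.

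The main obstacle, and the point that needs the most care rather than routine estimation, is justifying that the \emph{combined} error of the coupled system converges pathwise at rate $o$: the FV solver sees a flux whose coefficients are a piecewise-constant-in-time approximation of the Ornstein--Uhlenbeck-driven random field, so one must verify that the weak-order-$o$ SDE discretization error feeds into the conservation law only through the flux coefficients in a way that preserves rate $o$ after the stability/Gronwall argument for the linear hyperbolic system (the time-dependent version of the estimates behind Theorem~\ref{theorem:wellposedness}), and that the resulting constants have finite $k$-th moments so the expectation may be taken. Once this pathwise rate-$o$ statement is in hand, together with the uniform-in-$\Delta x$ second-moment bound from~\eqref{eq:estimates}, the remainder is the textbook Monte Carlo splitting above; I would state the pathwise rate as a consequence of the assumed convergence of each component scheme plus the linearity and stability of~\eqref{eq:general_hyp_stoch}, citing~\cite{mishra2014multi} for the analogous non-autonomous deterministic estimate.
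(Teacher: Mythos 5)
Your proposal is correct and follows essentially the same route as the paper: the same bias--variance splitting of $E_M[\U_\mathcal{T}]-\E[\U]$ via the triangle inequality, the bound $\E\|E_M[\U_\mathcal{T}]-\E[\U_\mathcal{T}]\|^2 \le \frac{1}{M}\mathrm{Var}(\U_\mathcal{T})$ for the statistical part, the pathwise rate-$o$ bound for the bias, and the choice $M=\mathcal{O}(\Delta x^{-2o})$ to equilibrate the two contributions. Your additional remarks on the coupled FV/SDE pathwise rate and the uniform second-moment bound only spell out details the paper leaves implicit in its assumptions.
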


\begin{proof}
 The assertion follows directly from the triangle inequality and the convergence of the corresponding FV scheme, together with the specific choice for the number of samples
 \begin{align*}
  \E\|\E[\U]-E_M[\U_\mathcal{T}]\|^2 &\leq \|\E[\U]-\E[\U_\mathcal{T}]\|^2 + \E\|E_M[\U_\mathcal{T}]-\E[\U_\mathcal{T}]\|^2\\
   &\leq \E\|\U-\U_\mathcal{T}\|^2 + \frac1{M}\text{Var}(\U_\mathcal{T})\\
   &= \mathcal{O}(\Delta x^{2o}).
 \end{align*}
 Here we used the standard convergence properties of the sequence of Monte Carlo estimators $(E_M, M\in\N)$. The norm $\|\cdot\|$ is the canonical norm for the (pathwise) solution $\U$. 
\end{proof}

There is no dependence between different samples $\omega_m$, and therefore the described algorithm is trivial to parallelize. Our implementation distributes the workload on as many CPU threads as are available in the computing environment, achieving (trivially) optimal parallelization-efficiency.

\section{Examples}\label{sec:examples}

We evaluate the proposed approach for uncertainty quantification of linear hyperbolic conservation laws on a suite of test cases.
We start with a ``degenerate'' case, where an autonomous scalar linear transport is driven by a (space-\emph{in}dependent) Ornstein--Uhlenbeck process. We present error and convergence analysis, based on the existence of the explicit solution formula~\eqref{eq:time_coeff_moment0}.
\begin{figure}[t]
    \centering 
        \includegraphics[width=0.5\textwidth]{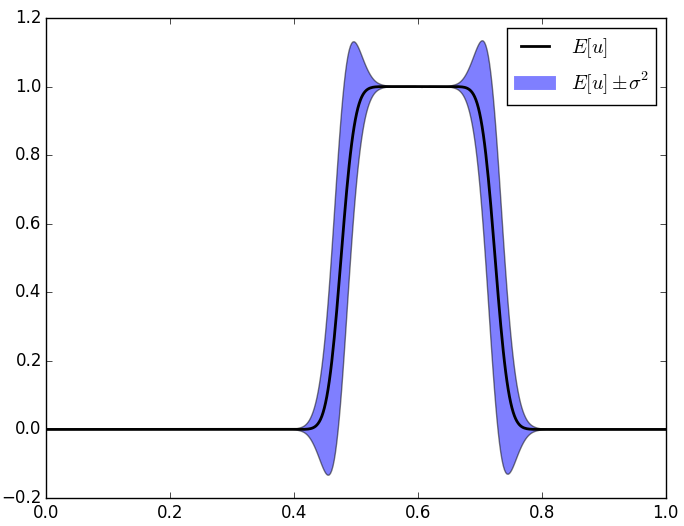}
        \caption{Expectation and standard deviation of the solution of the OU-driven linear transport~\eqref{eq:linear1dOU}.}
    \protect \label{fig:linear1dOU}
\end{figure}
Then, we present two realistic cases of linear systems of conservation laws in two dimensions, namely the equations for linear acoustics and the motion of magnetic fields (induction equation).
In both cases, we model the (background) velocity field in x-, and y-direction as a spatiotemporal random field, and we show hyperbolicity of the system.
The simulations show the robustness of the approach and reveal interesting features of the moments of the solutions.

\subsection{Ornstein--Uhlenbeck process driven scalar linear advection}\label{sec:OUlinear}
We \\start by considering the scalar linear stochastic conservation law
with a parameter given by the Ornstein--Uhlenbeck process \eqref{eq:OU}, i.e.,
\begin{align}\label{eq:linear1dOU}
 \begin{split}
        u_t + (a(t) u)_x &= 0, \\
        u(x,0,\omega) &= u_0(x), \quad x\in D = [0,1]\\
        d a(t)           &= \theta(\mu - a(t)) dt + \sigma dB(t),\\
	a(0) &= a_0,
 \end{split}
\end{align}
with periodic boundary conditions for $u$.
The eigenvalue of the PDE is the matrix itself, i.e., $\lambda(x,t,\omega) = a(t,\omega)$, and the normalized eigenvector is 1.
The system is hyperbolic according to Definition~\ref{def:hyperbolicity}, since $\|Q\|\|Q^{-1}\| = 1$ and, using Equation~\eqref{eq:OU_moments}, we have that the expected maximum eigenvalue
\begin{equation}
    \left|\E[\lambda(x,t,\omega)] \right| = \left|\E[a(t,\omega)] \right| = \left|\mu + (a_0-\mu)e^{-\theta t} \right| < \infty, \, \text{ for all } t\in\R_+
\end{equation}
is finite.

\begin{figure}
    \centering
    \begin{tabular}{lr}
        \subfigure[{Relative $L^2$-error of the mean.}]{
            \includegraphics[width=0.48\textwidth]{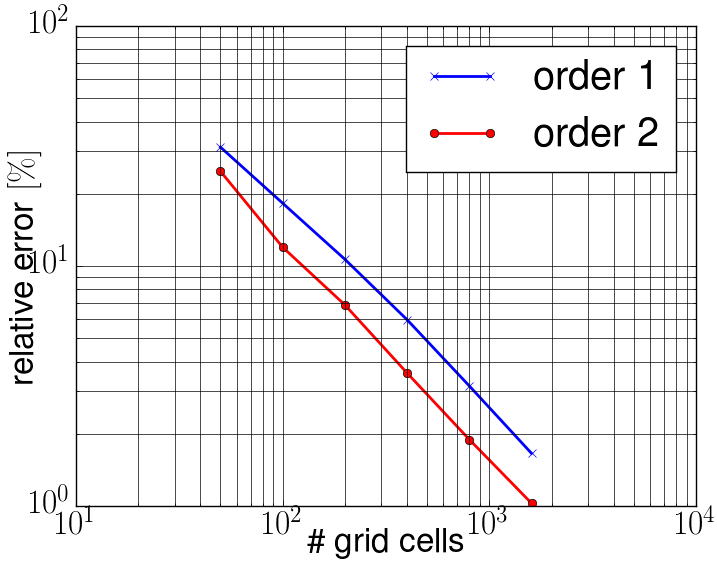}
        }
        &
        \subfigure[{Relative $L^2$-error of the variance.}]{
            \includegraphics[width=0.48\textwidth]{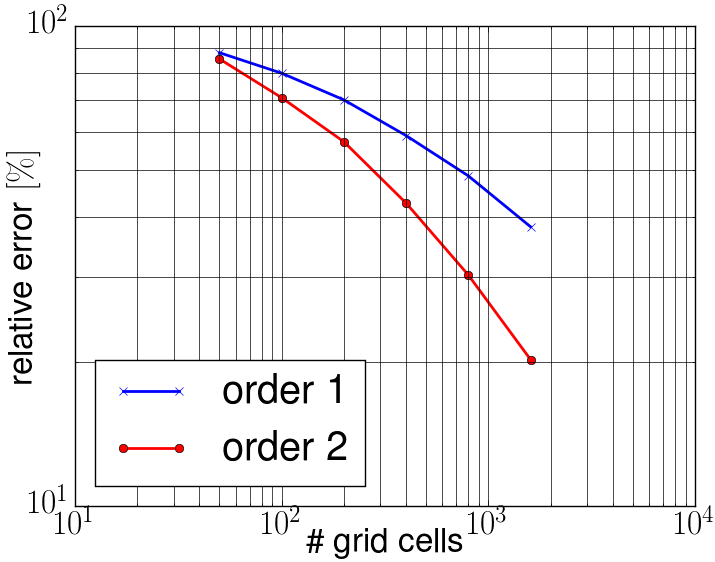}
        }
    \end{tabular}
    \caption{
        Relative errors (in \%) at time $t=1$ for the OU-process-driven linear transport Equation~\eqref{eq:linear1dOU}.
Both the first and second order MC FV method converge to the exact solution.
    }
    \protect \label{fig:convergencelinear1dOU}
\end{figure}

We use the derived explicit solution formula from Theorem~\ref{theorem_lineartime} for the moments for the solution to this equation to show convergence of the proposed MC FV method.
To this end, we compute the first two moments of Equation~\eqref{eq:linear1dOU} at time $t=1$, started with an initial condition consisting of a discontinuity, i.e. $u_0(x) = {\bf 1}_{[\frac{1}{2}-\frac{1}{8}, \frac{1}{2}+\frac{1}{8}]}$, where ${\bf 1}_A$ is the characteristic function of the subset $A$.
Furthermore, we choose the deterministic initial condition of the OU process to be $a(0) = -\frac{1}{4}$ and $(\mu, \theta, \sigma) = (\frac{1}{4}, 20, \frac{1}{2})$.
A few sample solutions for these parameters are plotted in Figure~\ref{fig:examples}(a).
We can see in Figure~\ref{fig:linear1dOU} that the expectation $\E(u)$ at time $t$ consists of the initial function $u_0$ transported with speed $\hat{\mu}$ and smoothed out wave fronts in accordance with Theorem~\ref{theorem_lineartime}. The largest values of the variance are located around the (smoothed out) discontinuities.

The first order scheme MC FV scheme uses a standard upwind discretization of the deterministic problem, and the Milstein scheme for the OU--process.
The second order scheme consists of a minmod flux-limiter in space and a second order strong stability preserving (SSP)  Runge-Kutta time-stepping for the deterministic problem, together with a (weak) second order stochastic Runge-Kutta scheme for the OU--process.
For the discrete time interval of the OU--process we use $h = \frac{\Dx}{2 \widehat\lambda} = 2\Dx$.
The number of Monte Carlo samples is chosen as $M = \Ord(\Delta x^{-2o})$.
For the described setup, Figure~\ref{fig:convergencelinear1dOU} shows the relative approximation error of the first two moments of the solution.
Both the first and second order MC FV method converge to the exact solution.
The convergence order for the first moment is $s\approx1$, and for the second moment is $s\approx0.7$.
The second order MC FV method has a smaller error constant compared to the first order method.
The full convergence order $s=2$ for the second order scheme is not achieved, since the deterministic solution consists of discontinuous piecewise linear data.

\subsection{Linear Acoustics in 2 dimensions}
Sound waves can be described using the vector of conserved variables
$\U = (p,u,v)^T$, where $p$ is the pressure, $u$ is the velocity in x-direction, and $v$ in y-direction.
Given a background density $\rho_0 \in \R_+$ and a bulk modulus of compressibility $K_0 \in \R_+$, 
the dynamics are governed by the following system of equations
\begin{align}
    \begin{split}
    \U_t 
    +&
    \left(
        \breve{\A}^{\left(
                \begin{smallmatrix}
                    1\\
                    0
                \end{smallmatrix}
            \right)
        }
        \U \right)_x
    +
    \left(
        \breve{\A}^{\left(
                \begin{smallmatrix}
                    0\\
                    1
                \end{smallmatrix}
            \right)
        }
        \U \right)_y
    = 0, \\
    \breve{\A}^\w = \breve{\A}^\w (\x,t,\omega) = &
    \begin{pmatrix}
        \breve{u}^\w_0(x,t,\omega) & \text{w}_x K_0 & \text{w}_y K_0 \\
        \text{w}_x /\rho_0 & \breve{u}^\w_0(x,t,\omega) & 0 \\
        \text{w}_y /\rho_0 & 0 & \breve{u}^\w_0(x,t,\omega)
    \end{pmatrix},\\
    \breve{u}^\w_0 &= u_0 \text{w}_x + v_0 \text{w}_y,
    \end{split}
    \protect 
    \label{eq:linear_acoustics}
\end{align}
where
$u_0(\X,t,\cdot)$ is the stochastic background velocity field in x-direction,
and $v_0(\X,t,\cdot)$ in y-direction.
Both $u_0$ and $v_0$ are spatiotemporal random fields as specified in Definition~\ref{def:timedependentRF}, i.e., they are given as the solution of the following SDEs
\begin{align}\label{eq:linearAccVelField}
    \begin{split}
         d u_0(\x,t) &= \theta_u(\mu_u(\x) - u_0(\x,t)) dt + \sigma_u dG_u(\x,t),\\
         d v_0(\x,t) &= \theta_v(\mu_v(\x) -\, v_0(\x,t)) dt + \sigma_v dG_v(\x,t).
    \end{split}
\end{align}

\begin{figure}
\centering
\begin{tabular}{lcr}
    \subfigure[Pressure.]{
\includegraphics[width=0.31\textwidth]{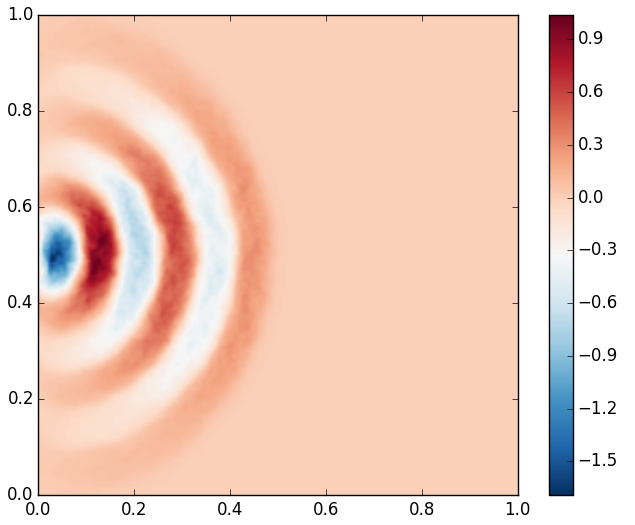}
}
&
    \subfigure[Velocity in x-direction.]{
\includegraphics[width=0.31\textwidth]{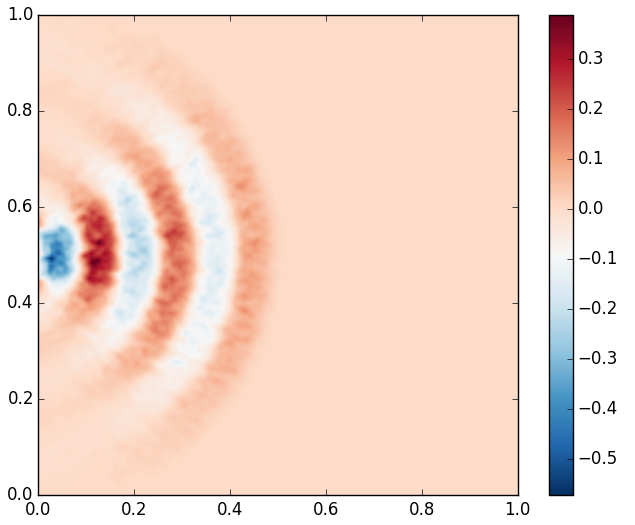}
}
&
    \subfigure[Velocity in y-direction.]{
\includegraphics[width=0.31\textwidth]{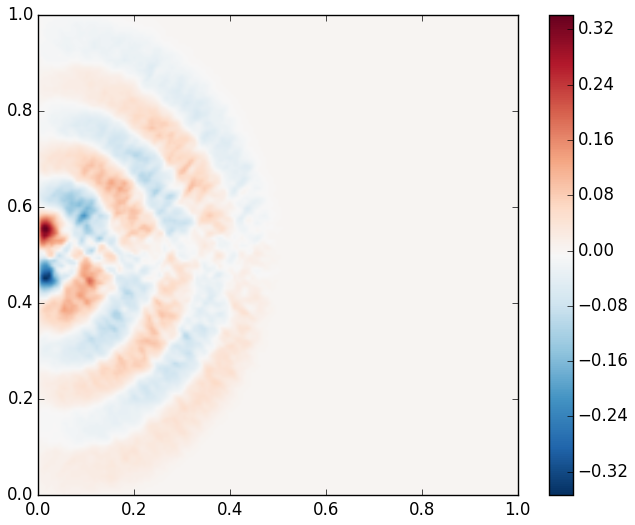}
}
\end{tabular}
\caption{
    A sample solution of the linear acoustics equation with a stochastic background velocity field.
    The solution shows that the background velocity field distorts the propagating waves.
}
\protect \label{fig:linearAccSamples}
\end{figure}

\noindent
Defining the sound speed as $c_0 = \sqrt{K_0/\rho_0}$, the eigensystem of \eqref{eq:linear_acoustics} is given by
\begin{align}
    \begin{split}
        \lambda_{1,3}^\w(\X,t,\omega) &= \breve{u}^\w_0(x,t,\omega) \mp c_0,\quad
        \lambda_2^\w(\X,t,\omega) = \breve{u}^\w_0(x,t,\omega), \\
        \Q^\w  &= 
    \begin{pmatrix}
        -\rho_0 c_0  & 0 & \rho_0 c_0 \\
        \text{w}_x & -\text{w}_y & \text{w}_x \\
        \text{w}_y & \text{w}_x & \text{w}_y \\
    \end{pmatrix}, \\
    \end{split}
\end{align}
where the rows of $\Q^\w$ consist of the right eigenvectors of $\A^\w$.
The eigenvectors are deterministic and are in fact the same as in the deterministic case. Therefore, the bound in Equation~\eqref{eq:bound} naturally holds.
Furthermore, we have the following estimates
\begin{align}
    \begin{split}
        |\E[\lambda^\w_{1,3} (\X,t,\omega)]| &\leq 2c_0 + |\E[u_0(\X,t,\omega)]| + |\E[v_0(\X,t,\omega)]|, \\
        |\E[\lambda^\w_2 (\X,t,\omega)]| &\leq |\E[u_0(\X,t,\omega)]| + |\E[v_0(\X,t,\omega)]|.
    \end{split}
\end{align}
For $u_0,v_0$ defined as in~\eqref{eq:timedependentRF} these expectations are given by
\begin{align}
    \label{eq:boundEtimeRF}
    \begin{split}
    \E[u_0(\X,t,\omega)] = \mu_u(\X) + (a_0(\X)-\mu_u(\X))e^{-\theta t}, \\
    \E[v_0(\X,t,\omega)] = \mu_v(\X) + (b_0(\X)-\mu_v(\X))e^{-\theta t}.
    \end{split}
\end{align}
Using this, it is easy to show the bound \eqref{eq:eigenvaluebound}, and therefore that the system~\eqref{eq:linear_acoustics} is hyperbolic according to Definition~\ref{def:hyperbolicity}.

\begin{figure}
    \centering
    \begin{tabular}{lr}
            \subfigure[Mean of pressure.]{
        \includegraphics[width=0.48\textwidth]{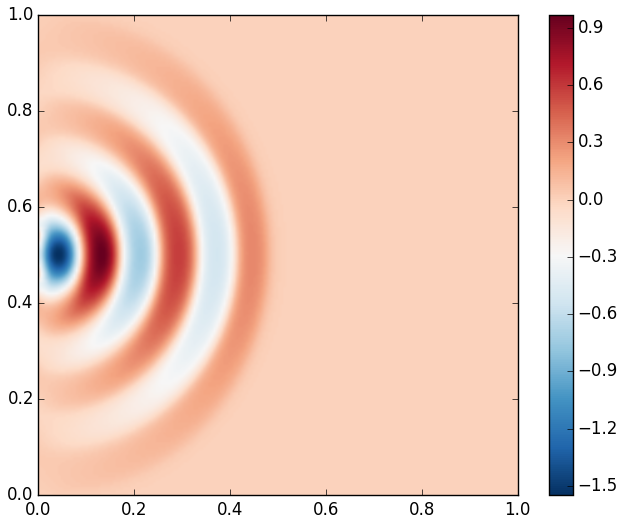}
        }
        &
        \subfigure[Variance of pressure.]{
        \includegraphics[width=0.48\textwidth]{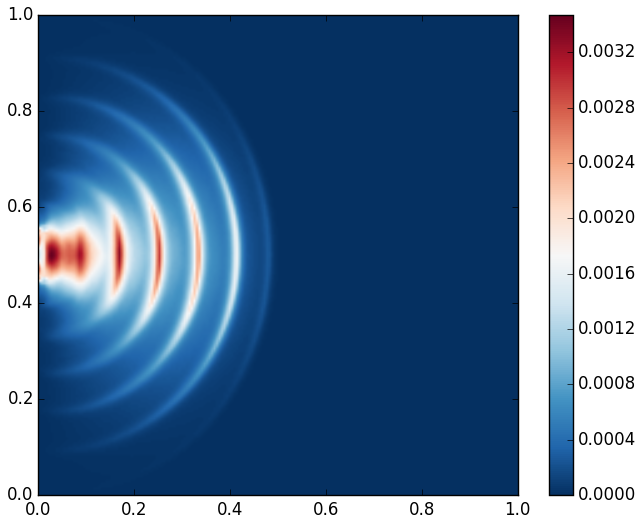}
        }
        \\
        \subfigure[Mean of velocity in y-direction.]{
        \includegraphics[width=0.48\textwidth]{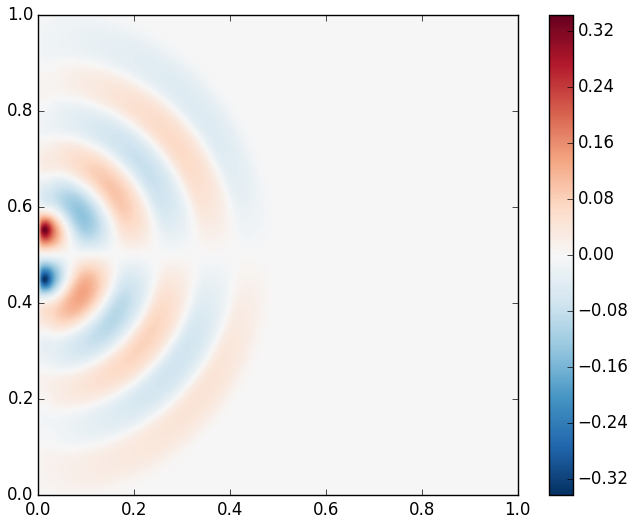}
        }
        &
        \subfigure[Variance of velocity in y-direction.]{
        \includegraphics[width=0.48\textwidth]{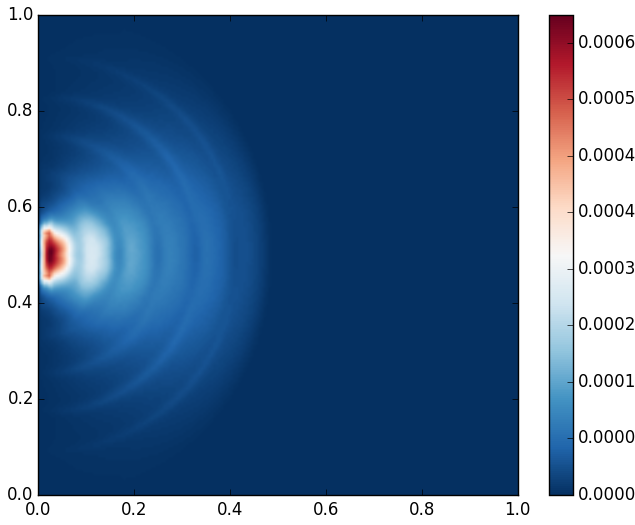}
        }
    \end{tabular}
\caption{
    Statistic mean and variance of a propagating sound wave computed with the proposed approach at time $t=1.5$.
    Waves are introduced on the left boundary and propagate radially through the domain, showing a symmetric structure, consisting of smooth circular wave fronts for pressure.
    The variances are observed where the mean of the solution changes sign.
    }
\protect \label{fig:UQlinearAcoustics}
\end{figure}

\begin{table}
\setlength{\tabcolsep}{.8em}
    \begin{tabular}{cc}
\setlength{\tabcolsep}{.0em}
    \begin{tabular}{c|cccc}
        \multicolumn{5}{c}{First order MC FV scheme}\\
        \hline
        $\Delta x = \Delta y$ & $1/16$ & $1/32$ & $1/64$ & $1/128$\\
        \hline
        $E_M[p]$ &   99.7  \% & 97.2 \% & 87.7 \% & 64.3 \% \\
        $E_M[u]$ &   99.3  \% & 96.6 \% & 87.3 \% & 64.4 \% \\
        $E_M[v]$ &   101.9 \% & 99.3 \% & 87.4 \% & 62.8 \% \\

        \hline
        \hline
        \multicolumn{5}{c}{Second order MC FV scheme}\\
        \hline
        $E_M[p]$ &   98.8  \% & 90.9 \% & 60.8 \% & 26.7 \% \\
        $E_M[u]$ &   98.6  \% & 90.0 \% & 61.3 \% & 27.6 \% \\
        $E_M[v]$ &   101.1 \% & 91.6 \% & 58.7 \% & 24.9 \% \\

    \end{tabular}
    &
\setlength{\tabcolsep}{.0em}
    \begin{tabular}{c|cccc}
        \multicolumn{5}{c}{First order MC FV scheme}\\
        \hline
        $\Delta x = \Delta y$ & $1/16$ & $1/32$ & $1/64$ & $1/128$\\
        \hline
        $V_M[p]$ & 99.8  \% & 98.8 \% & 96.0 \% & 85.2 \% \\
        $V_M[u]$ & 99.9  \% & 99.4 \% & 97.8 \% & 91.0 \% \\ 
        $V_M[v]$ & 100.0 \% & 99.8 \% & 98.6 \% & 91.9 \% \\
        \hline
        \hline
        \multicolumn{5}{c}{Second order MC FV scheme}\\
        \hline
        $V_M[p]$ &   92.8 \% & 93.8 \% & 72.5 \% & 40.1 \% \\
        $V_M[u]$ &   97.2 \% & 93.2 \% & 79.7 \% & 61.0 \% \\
        $V_M[v]$ &   99.6 \% & 97.5 \% & 86.6 \% & 63.1 \% \\
    \end{tabular}
    \end{tabular}
    \caption{(Self) convergence to the solution with the second order scheme with $\Delta x = \Delta y = 1/256$.}
    \protect \label{table:convLinacc}
\end{table}

We test our approach for an initial condition given by
\begin{equation}
    \U_0(\X) = (p(\X),u(\X),v(\X))^T = (0,0,0)^T.
\end{equation}
We point out that pressure values do not have to be positive, because the system~\eqref{eq:linear_acoustics} is derived by linearizing around a background state $(p_0,u_0,v_0)$.
This means that $p,u$, and $v$ describe the perturbation relative to the background state.
The domain is given by $D = [0,1]^2$ with Neumann boundary conditions on the right, top and bottom boundary. On the left boundary we have an acoustic source given by
\begin{equation}
    \U(0,y,t) = 
    \begin{cases}
        (0, \sin(4\pi t), 0)^T, & \text{ if } |y-\frac{1}{2}|<0.05 \\
        (0, 0, 0)^T, & \text{ otherwise.}
    \end{cases}
\end{equation}
The covariance for the stochastic background velocities $u_0,v_0$ is given by the Lebesgue density \eqref{eq:correlation} with $q=2$ and $l=4$.
For the initial condition of Equation~\eqref{eq:timedependentRF} we use $Z(\X,0) = 0$, for the mean $\mu(\X) = 0$, and the remaining parameters are set to $\theta = \sigma = 1$.

As a numerical scheme we use an approximate Riemann solver of the HLL-type (see~\cite{LEV1}),
resolving the outermost waves.
Let $\U_{L,R}$, and $\mathbf{F}_{L,R}$ denote the left and right state and flux respectively.
Then the numerical flux is given by
\begin{equation}
    \mathbf{F}^\text{HLL}(\U_L, \U_R) =
    \begin{cases}
        \mathbf{F}_L, & \text{ if } \frac{x}{t} \leq s_L,\\
        \mathbf{F}_*, & \text{ if } s_L < \frac{x}{t} < s_R,\\
        \mathbf{F}_R, & \text{ if } s_R \leq \frac{x}{t},\\
    \end{cases}
\end{equation}
where $s_L = \lambda_1$, $s_R = \lambda_3$, and $\mathbf{F}_*$ is determined from conservation leading to
\begin{equation}
    \mathbf{F}_* = \frac{s_R \mathbf{F}_L - s_L \mathbf{F}_R + s_L s_R (\U_R - \U_L)}{s_R - s_L}.
\end{equation}

For first and second order MC FV methods, approximations of the random background velocity field~\eqref{eq:linearAccVelField} are based on the Milstein scheme, and a (weak) second order stochastic Runge-Kutta scheme, respectively.
For the given parameters, the discrete time interval of the OU--process is $h = \frac{1}{2 c_0} \min\{\Dx,\Dy\}$.

Results of the \emph{deterministic} FV simulation are given in Figure~\ref{fig:linearAccSamples}.
For the given scenario, a sample solution is plotted at time $t=1.5$ with a second order minmod scheme on a $256\times256$ mesh.
We can see that the waves enter the domain at the left boundary and propagate radially through the domain.
As expected, the waves are distorted due to the presence of the random background velocity field.

For the \emph{stochastic} MC FV simulation we compute the mean and variance of the solution with a scheme of order $o$ using $M=\Ord\left((1/\Delta x)^{-2o}\right)$ Monte Carlo samples (see Equation~\eqref{eq:mc_samples}).
The structure of the mean of the propagating waves shown in Figure~\ref{fig:UQlinearAcoustics} resembles the structure of the waves seen in the deterministic simulation of one sample shown in Figure~\ref{fig:linearAccSamples}.
The sound waves introduced on the left boundary travel radially through the domain, showing a symmetric structure, consisting of smooth circular wave fronts for pressure.
The largest values of the variance of a conserved quantity are observed around sign changes of the mean of the solution of that quantity.
Table~\ref{table:convLinacc} shows self-convergence of the proposed MC FV method to a reference solution on a $256\times256$ grid.

\subsection{Magnetic Induction Equation in 2 dimensions}
\begin{figure}
\centering
\begin{tabular}{lr}
    \subfigure[Magnetic field in x-direction.]{
\includegraphics[width=0.48\textwidth]{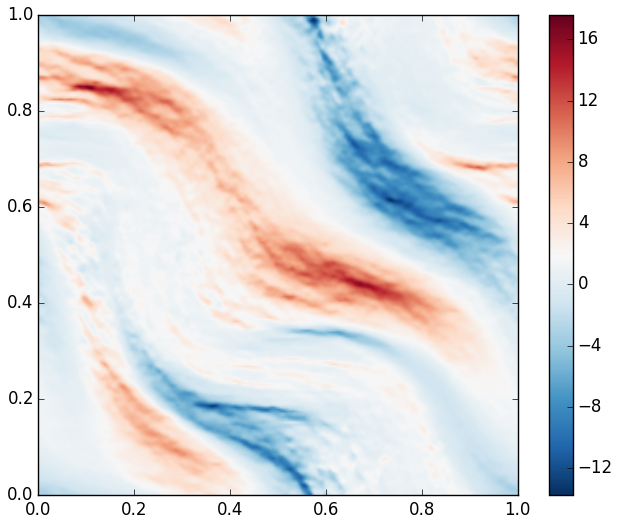}
}
&
\subfigure[Magnetic field in y-direction.]{
\includegraphics[width=0.48\textwidth]{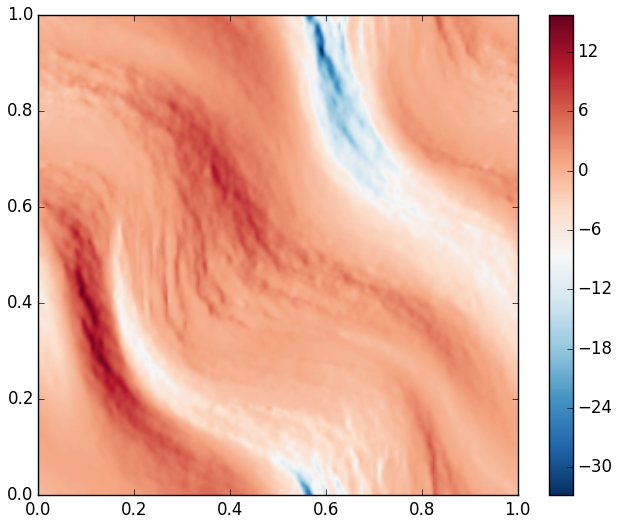}
}
\\
\subfigure[Velocity field in x-direction.]{
\includegraphics[width=0.48\textwidth]{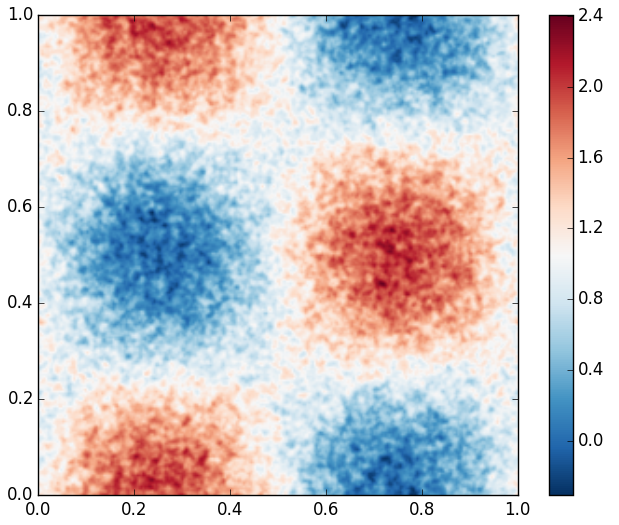}
}
&
\subfigure[Velocity field in y-direction.]{
\includegraphics[width=0.48\textwidth]{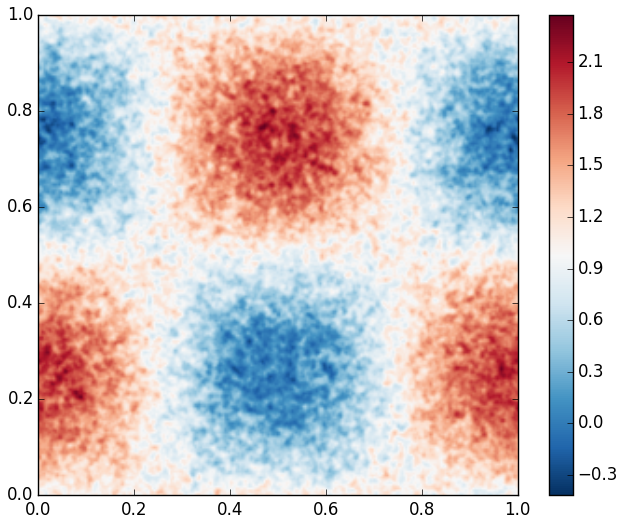}
}
\end{tabular}
\caption{
    Sample solution at time $t=0.75$ for the magnetic induction equation.
    The initial magnetic field has been advected and distorted as a result of the stochastic velocity field.
}
\protect \label{fig:InductionEQsample}
\end{figure}

The magnetic induction equation describes the evolution of a magnetic field $\U$ for a given velocity field $\V$.
We use the symmetric form of the equations, see \cite{fuchs2009stable}, given by
\begin{equation}\label{eq:induction_equation}
    \begin{cases}
        \partial_t \U + \Div (\V \otimes \U - \U \otimes \V) =  -\V \div(\U), \\
        \hfill \U(\x,0) = \U_0(\x), \text{ with } \Div(\U_0) = 0.
    \end{cases}
\end{equation}
The equations have the intrinsic constraint that the divergence of the magnetic field $\U$ is preserved in time, i.e., $\partial_t(\Div \U) = 0$.
Therefore, the system \eqref{eq:induction_equation} is analytically equivalent to the conservative form without any source term.
We consider the equation in two dimensions, where the components of the velocity field $\V=(u,v)$ are given by spatiotemporal random fields as defined in Equation~\eqref{eq:timedependentRF}, i.e., as the solution of the following SDEs
\begin{align}\label{eq:magindVel}
    \begin{split}
         d u(\x,t) &= \theta_u(\mu_u(\x) - u(\x,t)) dt + \sigma_u dG_u(\x,t),\\
         d v(\x,t) &= \theta_v(\mu_v(\x) -\, v(\x,t)) dt + \sigma_v dG_v(\x,t).
    \end{split}
\end{align}

The eigensystem of the symmetric system~\eqref{eq:induction_equation} is given by
\begin{equation}
    \lambda^\w_{1,2}(\X,t,\omega) = \V(\X,t,\omega) \cdot \w, \quad
        \Q^\w  = 
    \begin{pmatrix}
        1 & 0\\
        0 & 1
    \end{pmatrix}. \\
\end{equation}
It is easy to show that the system is hyperbolic according to Definition~\ref{def:hyperbolicity}.
The bound~\eqref{eq:bound} trivially holds, as $Q$ is the matrix identity.
Furthermore, we have that 
\begin{equation}
        |\E[\lambda^\w_{1,2} (\X,t,\omega)]| \leq |\E[u_0(\X,t,\omega)]| + |\E[v_0(\X,t,\omega)]|.
\end{equation}
Using \eqref{eq:boundEtimeRF}, we can easily show the bound \eqref{eq:eigenvaluebound} on the eigenvalues.

Numerical schemes approximating the solutions of the induction equation~\eqref{eq:induction_equation} have to address the divergence constraint.
Here, we will use the "stable upwind scheme" presented in \cite{fuchs2009stable}.
To test our approach, we consider the equation on the domain $D = [-\frac{1}{2},\frac{1}{2}]^2$, with periodic boundary conditions, and a divergence free initial magnetic field, given by a potential function $A$, i.e.,
\begin{equation}
    \U(x,y) = (\partial_y A(x,y), \partial_x A(x,y)),\;\text{ with } A(x,y) = \frac{1}{2\pi} \sin(2\pi x) \sin(2\pi y) + y -x.
\end{equation}
The mean of the background velocity $\V(\X,t,\omega)$, see Equation~\eqref{eq:timedependentRF}, is defined as
\begin{equation}
    \mu(x,y) = \left(1+\frac{\cos(2\pi x) + 2 \sin(2\pi y)}{4},
                     1+\frac{\sin(2\pi x) + 2 \cos(2\pi y)}{4}\right).
\end{equation}
We set the initial condition of the velocity field, i.e., of Equation~\eqref{eq:timedependentRF}, to equal the mean, i.e., $Z(x,y) = \mu(x,y)$, 
and the remaining parameters to $\theta = 1$, and $\sigma = 10$.
The covariance is given by the Lebesgue density \eqref{eq:correlation} with $q=2$ and $l=4$.
The approximations of the random background velocity field~\eqref{eq:magindVel} are based on the Milstein scheme.
For the given parameters, the discrete time interval of the OU--process is $h = \frac{1}{4} \min\{\Dx,\Dy\}$.

Results of the \emph{deterministic} FV simulation are shown in Figure~\ref{fig:InductionEQsample}, together with the sample vector field.
The first order stable upwind scheme is used to approximate the solution at time $t=0.75$ on a $256\times256$ mesh.
We see that the initial magnetic field has been advected and distorted due to the space- and time-dependent velocity field.

For the \emph{stochastic} MC FV simulation we compute the mean and variance of the solution using $M=100\left(\frac{1}{\Delta x}\right)^{-1}$ Monte Carlo samples (see Equation~\eqref{eq:mc_samples}).
The structure of the mean of the propagating waves shown in Figure~\ref{fig:UQinductionEQ} resembles the structure of the waves seen in the deterministic simulation of one sample shown in Figure~\ref{fig:InductionEQsample}.
The values of the variance exhibit an interesting structure, which will be analyzed in a forthcoming paper.
The table in Figure~\ref{fig:UQinductionEQ} shows the expected convergence rate for the first order scheme, for both the first and the second statistical moment.
The second statistical moment has a larger error constant compared to the first moment.

\begin{figure}
\centering
\begin{tabular}{lr}
    \subfigure[Mean of magnetic field in x-direction.]{
\includegraphics[width=0.48\textwidth]{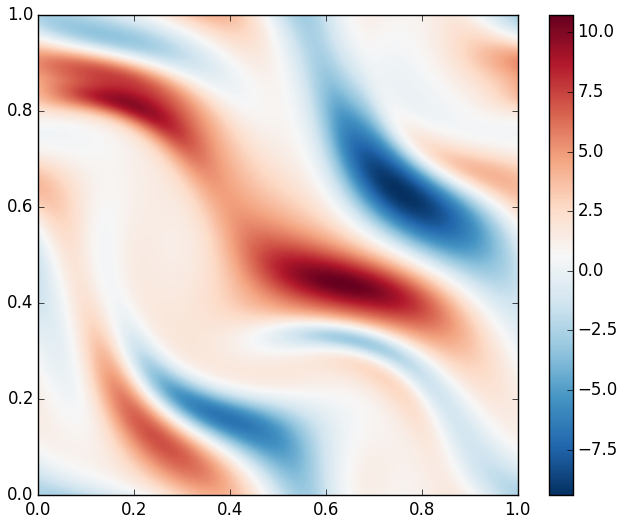}
}
&
\subfigure[Variance of magnetic field in x-direction.]{
\includegraphics[width=0.48\textwidth]{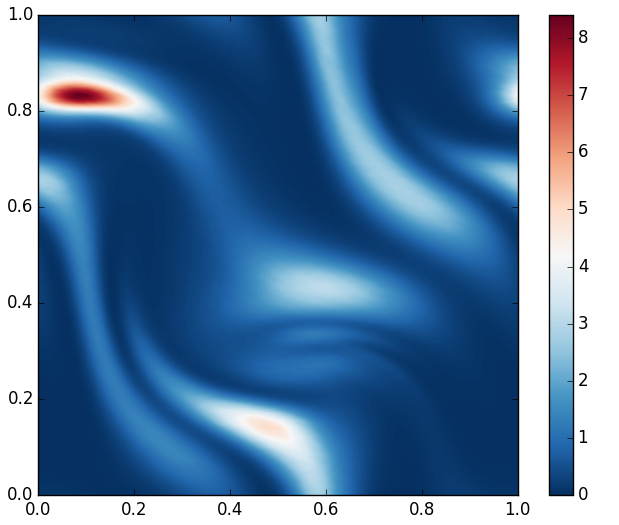}
}
\\
\subfigure[Mean of magnetic field in y-direction.]{
\includegraphics[width=0.48\textwidth]{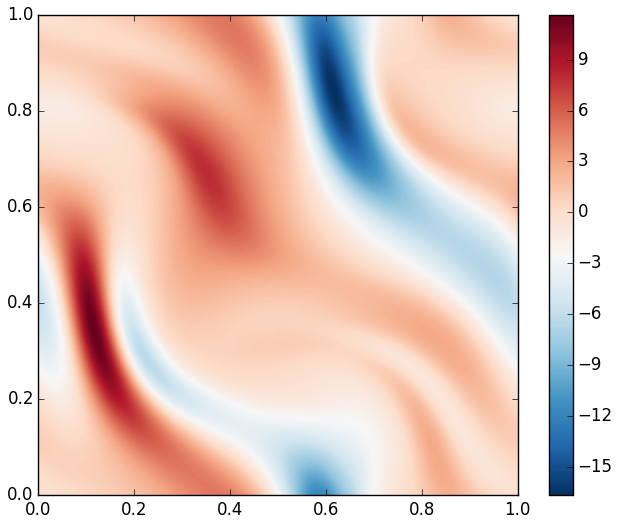}
}
&
\subfigure[Variance of magnetic field in y-direction.]{
\includegraphics[width=0.48\textwidth]{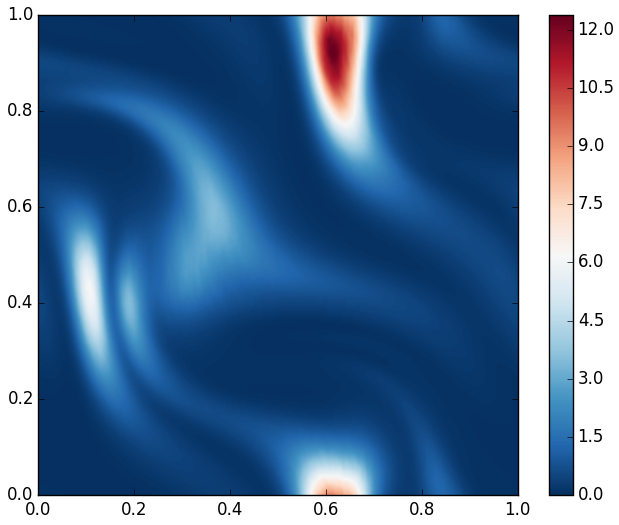}
}
\\
&\\
\multicolumn{2}{c}{Self convergence to solution with $\Delta x = \Delta y = 1/256$}\\
\multicolumn{2}{c}{
\setlength{\tabcolsep}{.5em}
\begin{tabular}{c|cccc}
    $\Delta x = \Delta y$ & $1/16$ & $1/32$ & $1/64$ & $1/128$\\
    \hline
    $E_M[U_0]$ & 73.0 \% & 60.0 \% & 41.9 \% & 19.6 \% \\
    $E_M[U_1]$ & 79.9 \% & 61.3 \% & 41.1 \% & 18.8 \% \\
    \hline
    \hline
    $\Delta x = \Delta y$ & $1/16$ & $1/32$ & $1/64$ & $1/128$\\
    \hline
    $V_M[U_0]$ & 98.8 \% & 97.7 \% & 89.9 \% & 62.5 \% \\
    $V_M[U_1]$ & 98.9 \% & 97.1 \% & 88.8 \% & 82.0 \% \\
\end{tabular}
}
\end{tabular}
\caption{
    Results of the computation of mean and variance using the first order stable upwind scheme.
    The magnetic field shows interesting features.
    The first moments of the solution converge to the reference solution at the expected rate.
}
\protect \label{fig:UQinductionEQ}
\end{figure}

\section{Conclusions and Outlook}\label{sec:conclusion}
Linear systems of hyperbolic conservation laws with random coefficients are considered. 
Those coefficients are modeled as time-dependent random fields, leading to a coupled system consisting of the conservation law and stochastic differential equations for each of the parameters.
An appropriate solution concept is developed and a Monte Carlo based algorithm is presented to approximate statistical moments of the solution.
Important examples are presented, namely linear acoustics and magnetic induction with random velocity field coefficients.
The results reveal interesting structures in the moments of the solution.
Error and convergence analysis validate the proposed method.

In the future, we plan to establish a rigorous error analysis and formal convergence proofs.
Furthermore, we will increase efficiency of the approach by developing highly parallel Multi Grid MC FV methods, utilizing the power of CPUs on coarse grid levels and graphics processing units (GPUs) on fine grid levels.
This will further facilitate simulations of more complex problems in three dimension, and problems where the time-dependent random fields are given by more complicated SDEs, or even given by stochastic partial differential equations (SPDEs).

\section*{Acknowledgement}
The authors would like to express their gratitude towards the Center of Mathematics for Applications (CMA) at the University of Oslo, the Seminar for Applied Mathematics at the Eidgen\"ossische Technische Hochschule Z\"urich (ETH) and SINTEF Oslo. The research of A. Barth leading to these results has further received funding from the German Research Foundation (DFG) as part of the Cluster of Excellence in Simulation Technology (EXC 310/2) at the University of Stuttgart, and it is gratefully acknowledged.

\bibliographystyle{siam}
\bibliography{references}
\end{document}